\theoremstyle{plain}
\newtheorem{proposition}{Proposition}[section]
\newtheorem{theorem}[proposition]{Theorem} 
\newtheorem{lemma}[proposition]{Lemma}
\newtheorem{conjecture}[proposition]{Conjecture}
\newtheorem{corollary}[proposition]{Corollary}
\theoremstyle{definition}
\newtheorem{remark}[proposition]{Remark}
\newcommand{\bra}{\langle}
\newcommand{\ket}{\rangle}
\DeclareMathOperator{\ch}{char}
\DeclareMathOperator{\F}{\mathbb{F}}
\DeclareMathOperator{\Stab}{Stab}
\def\sep{{\mathrm{sep}}}
\begin{document}

\title{The Noether number   of the non-abelian \\ group of order $3p$}   
\author{K\'alm\'an Cziszter  
\thanks{This paper is based on results from the PhD thesis of the author written at the Central European University.} 
}
\date{}
\maketitle 
{\small \begin{center} 
Central European University, Department of Mathematics and its Applications, 
N\'ador u. 9, 1051 Budapest, Hungary \\
Email: cziszter\_kalman-sandor@ceu-budapest.edu 
\end{center}
}

\maketitle

\begin{abstract} 
It is proven that for any representation over a field of characteristic $0$ 
of the non-abelian semidirect product of a cyclic group of prime order $p$ and the group of order $3$  
the corresponding algebra of polynomial invariants is generated by elements of degree at most $p+2$. 
We also determine the exact degree bound for any separating system of the polynomial invariants
of any representation of  this group in characteristic not dividing $3p$.
\vskip 0.5 cm

\noindent 2010 MSC: 13A50 (Primary) 11B75, 13A02 (Secondary)

\noindent Keywords: Noether number, polynomial invariants, separating invariants
\end{abstract} 

\section{Introduction}\label{sec:intro} 

A classical theorem of Noether asserts that the ring of polynomial invariants of a finite group $G$ is generated by its elements of degree  at most 
$|G|$, provided that the characteristic of the base field $\F$ does not divide $|G|$ (this was proved in \cite{noether:1916} for $\ch(\F)=0$, 
and the result was extended more recently to non-modular positive characteristic independently by Fleischmann \cite{fleischmann} and Fogarty \cite{fogarty}). 
Denoting by $\beta(\F[V]^G)$ the minimal positive integer $n$ such that the algebra $\F[V]^G$ of polynomial invariants is generated by elements of degree at most $n$, 
 the \emph{Noether number} is defined as
\[\beta(G):=\sup_V\beta(\F[V]^G)\]
where $V$ ranges over all finite dimensional $G$-modules over $\F$. Throughout the paper we shall assume that $\ch(\F)$ does not divide $|G|$.  
For a cyclic group $Z_n$ of order $n$  we have $\beta(Z_n)=n$, but Noether's bound is never sharp for a non-cyclic group,  see \cite{schmid}, \cite{domokos-hegedus}, 
\cite{sezer}, \cite{cziszter-domokos:1}.  These latter works show that an unavoidable step in improving the Noether bound is to find good upper bounds for 
$\beta(Z_p\rtimes Z_q)$, where $Z_p\rtimes Z_q$ is the non-abelian semidirect product of cyclic groups of odd prime order (hence  $q$ divides $p-1$). 
The exact value of the Noether number is known only for  a few very special series of groups (see \cite{cziszter-domokos:2}) and for a couple of groups of small order.  
The following conjecture  is  attributed to \cite{pawale} in \cite{wehlau}: 

\begin{conjecture}[Pawale] \label{conj:pawale}  We have  $\beta(Z_p\rtimes Z_q)=p+q-1$. 
\end{conjecture}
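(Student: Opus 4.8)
The plan is to establish the two inequalities $\beta(Z_p\rtimes Z_q)\ge p+q-1$ and $\beta(Z_p\rtimes Z_q)\le p+q-1$ separately. Fix generators $\sigma,\tau$ with $\tau\sigma\tau^{-1}=\sigma^{r}$, where $r$ has multiplicative order $q$ modulo $p$, and set $N=\langle\sigma\rangle\cong Z_p$, so that $G/N\cong Z_q$ and $\F[V]^G=(\F[V]^N)^{Z_q}$. Over a splitting field I write $V=V^N\oplus V'$, with $V'$ a sum of the $q$-dimensional irreducibles; I choose coordinates $y_1,\dots,y_m$ on $V^N$ (each of $N$-weight $0$, and, after diagonalizing, $\tau y_j=\omega^{d_j}y_j$ for a primitive $q$-th root of unity $\omega$) and $x_{a,i}$ on $V'$, where $a\in Z_p\setminus\{0\}$ is the $N$-weight and $\tau\colon x_{a,i}\mapsto x_{ra,i}$. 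An $N$-invariant monomial in the $x_{a,i}$ is precisely one whose weight multiset is a zero-sum sequence over $Z_p$, and $\tau$ acts on such monomials by multiplying every weight by $r$; since $r$ has order $q$, every nonzero $\langle r\rangle$-orbit has size exactly $q$. Two distinguished families of $G$-invariants will drive the argument: the \emph{orbit products} $\Pi_{a,i}=\prod_{k=0}^{q-1}x_{r^{k}a,i}$, which are $\tau$-invariant monomials of degree $q$ (their weight is $a(1+r+\dots+r^{q-1})\equiv 0$), and the \emph{power sums} $\operatorname{tr}(x_{a,i}^{p})=\sum_{k}x_{r^{k}a,i}^{p}$ of degree $p$.

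For the lower bound I would exhibit an explicit indecomposable invariant of degree $p+q-1$. Take $V$ to consist of one $q$-dimensional irreducible with coordinates $x_0,\dots,x_{q-1}$ together with one line $y$ on which $\tau y=\omega y$. A short computation gives that $s:=\sum_{k}\omega^{k}x_{k}^{p}$ is $N$-invariant and satisfies $\tau s=\omega s$, so $f:=s\,y^{\,q-1}$ is a $G$-invariant of degree $p+(q-1)$. Its indecomposability I would prove by a bigrading argument: $f$ has $y$-degree exactly $q-1<q$, so in any expression $f=\sum_\ell g_\ell h_\ell$ with positive-degree invariant factors the relevant terms must pair a $y$-free invariant with one of $y$-degree $q-1$; forcing the $\tau$-character of the $x$-part of the latter to be $\omega$ and invoking $\D(Z_p)=p$ shows its $x$-degree is already at least $p$, leaving no room for a proper splitting.

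For the upper bound it suffices, after the reduction $\F[V]^G=(\F[V]^N)^{Z_q}$ and Reynolds averaging, to show that $\operatorname{tr}(m)$ is decomposable in $\F[V]^G$ for every $N$-invariant monomial $m$ with $\deg m\ge p+q$. The one reduction that works cleanly is \emph{orbit-product peeling}: because $\Pi_{a,i}$ is a $\tau$-invariant monomial, if $m$ contains a full orbit then $\operatorname{tr}(m)=\Pi_{a,i}\cdot\operatorname{tr}(m/\Pi_{a,i})$ with no surviving cross terms, a product of two invariants of strictly smaller degree. Hence I may assume that $m$ contains no full $\langle r\rangle$-orbit, i.e.\ for every index $i$ the weights occurring in $m$ meet each orbit in at most $q-1$ elements. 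The analysis then splits according to the $\tau$-character $\chi$ carried by the $y$-part of $m$: one matches a $\chi$-semi-invariant contribution from $\F[V^N]$ against a $\chi^{-1}$-semi-invariant contribution from $\F[V']^N$, using $\beta(Z_q)=q$ to bound the $V^N$-side by degree at most $q-1$ for nontrivial $\chi$, so that the target sum $p+q-1$ appears as $(q-1)+p$.

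The main obstacle is the remaining \emph{concentrated-exponent} case, where $m$ has no full orbit yet large degree because a few coordinates occur to high powers; the simplest instance is $m=x_{a,i}^{2p}$, whose transfer $\sum_k x_{r^k a,i}^{2p}$ has degree $2p$ but admits no $\tau$-invariant proper factor, so orbit-product peeling does not apply. Setting $u_k=x_{r^k a,i}^{p}$, these troublesome invariants are exactly the cyclic-symmetric functions of the $u_k$, and within $\F[u_0,\dots,u_{q-1}]^{Z_q}$ the middle elementary symmetric functions $e_j$ are genuine generators of degree $jp$; the whole difficulty is to show that in the larger ring $\F[V]^G$ each such $e_j$ with $1<j<q$ nevertheless decomposes into invariants of degree at most $p+q-1$, using the $G$-specific mixed transfers $\operatorname{tr}(x_{a,i}^{\,b_0}x_{ra,i}^{\,b_1}\cdots)$ and the orbit product together with Newton-type identities. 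I expect this step — producing the intermediate generators that express the power sums $\sum_k u_k^{\,t}$ through $s=\sum_k u_k$, through $\Pi$, and through lower mixed transfers, and verifying the degree bookkeeping uniformly in $q$ — to be the heart of the proof and the principal obstacle, exactly the point at which the passage from $q=3$ to general $q$ becomes delicate.
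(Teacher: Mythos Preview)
The statement is presented in the paper as an open \emph{conjecture}, not a theorem; the paper's contribution is only the special case $q=3$ (Theorem~\ref{betak_zpz3} with $k=1$, valid in characteristic zero, and for multiplicity-free modules in non-modular positive characteristic). For $q\ge 5$ the conjecture remains unsettled as far as the paper is concerned, so there is no ``paper's proof'' of the general statement to compare against.

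Your upper-bound argument contains a genuine gap, which you yourself name: after orbit-product peeling you are left with concentrated monomials such as $x_{a,i}^{2p}$, and you must show their transfers lie in $(R_+)^2$ using only invariants of degree at most $p+q-1$. You offer no mechanism for this beyond the hope that ``Newton-type identities'' among the $u_k=x_{r^ka,i}^{\,p}$ will do the job. That hope is precisely the content of the open conjecture. Even in the case $q=3$ that the paper does resolve, the proof looks nothing like your sketch: Proposition~\ref{ZpZ3_borzalom} is a six-branch case analysis on the multiplicity pattern $\lambda(w)$ of a degree-$4$ divisor of $m$, invoking at various stages the Cauchy--Davenport theorem, Vosper's theorem, Balandraud's addition theorem, and a result of Freeze and Smith, together with ad hoc treatment of several $p=7$ exceptions. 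This additive-combinatorial machinery is what actually replaces your missing step, and it is tailored to $q=3$; nothing in your outline suggests how to carry it to general $q$. Your lower-bound construction is essentially the known one (the paper simply cites~\cite{cziszter-domokos:2} for $\beta(Z_p\rtimes Z_q)\ge p+q-1$), modulo a sign slip: with $\tau\colon x_k\mapsto x_{k+1}$ one gets $\tau s=\omega^{-1}s$ rather than $\omega s$, so the companion $y$-character must be adjusted accordingly.
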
 
The analogous statement for $q=2$ (i.e. when the group is the dihedral group of order $2p$) is proved in \cite{schmid} for $\ch(\F)=0$ and in \cite{sezer} for the case when $\ch(\F)$ does not divide $2p$. Otherwise the only known case of Conjecture~\ref{conj:pawale} is that $\beta(Z_7\rtimes Z_3)=9$ (this was proved for $\ch(\F)=0$ in \cite{pawale}, and an alternative proof extending to non-modular positive characteristic is given in \cite{cziszter-domokos:1}). The inequality $\beta(Z_p\rtimes Z_q)\ge p+q-1$ follows  from a more general statement in \cite{cziszter-domokos:2}. 
Explicit upper bounds for $\beta(Z_p\rtimes Z_q)$ are given in \cite{cziszter-domokos:1}; in the special case $q=3$ they yield 
$\beta(Z_p\rtimes Z_3)\le p+6$ (this was proved in \cite{pawale} for $\ch(\F)=0$).  

This paper focuses on the non-abelian semidirect product $G:=Z_p\rtimes Z_3$.
Our main result confirms Conjecture~\ref{conj:pawale} for this group $G$ inasmuch as we prove that
$\beta(\F[V]^G)=p+2$ holds for $\ch(\F) = 0$ or for  any multiplicity free $G$-module $V$ in non-modular positive characteristic. 
In fact this is the special case $k=1$  of   Theorem~\ref{betak_zpz3}, asserting that $\beta_k(\F[V]^G)=kp+2$ for any positive integer $k$, 
where $\beta_k(\F[V]^G)$  denotes the maximal degree of a homogeneous element of $\F[V]^G$ not contained in the $(k+1)$-st power of the maximal homogeneous ideal $\F[V]^G_+$. 

As an application of the result on multiplicity free modules
we determine the exact value of $\beta_{\sep}(Z_p\rtimes Z_3)$, the version of the Noether number for \emph{separating invariants}, studied recently in 
\cite{kohls-kraft}. 
Thanks to a theorem of Draisma, Kemper, and Wehlau \cite{draisma-kemper-wehlau} on \emph{cheap polarization} (see also \cite{losik-michor-popov} and  \cite{grosshans:2007}), our study of the multiplicity free $Z_p\rtimes Z_3$-modules will be sufficient to determine the \emph{separating Noether bound} for arbitrary modules, see Theorem~\ref{thm:sepZpZ3}.    


\section{Preliminaries}\label{sec:prel}

In the rest of the paper $G$ will be the non-abelian semidirect product 
\[G:=Z_p\rtimes Z_3=\bra c,d: c^p=d^3=1, dcd^{-1}=c^r\ket\]  
where $r$ has order $3$ modulo $p$ for some prime $p$ with $3$ dividing $p-1$, and $\F$ is a  field of characteristic different from $p$ or $3$.  For sake of convenience we shall assume in the text that $\F$ is algebraically closed. This is relevant for example when we list the irreducible $G$-modules. However, the Noether number or the condition that a $G$-module is multiplicity free are not sensible for extension of the base field. In particular, the final statements 
Corollary~\ref{cor:zpz3},  Theorem~\ref{betak_zpz3},  and Theorem~\ref{thm:sepZpZ3} remain valid without the assumption on algebraic closedness. 
By a \emph{$G$-module} we mean a finite dimensional $\F$-vector space endowed with a linear (left) action of $G$ on $V$, and write $\F[V]$ for the coordinate ring of $V$. 
This is the symmetric tensor algebra of $V^*$, the dual space of $V$ endowed with the natural right action $x^g(v):=x(gv)$ ($g\in G$, $v\in V$, $x\in V^*$). 
The corresponding \emph{algebra of polynomial invariants} is 
\[\F[V]^G:=\{f\in \F[V]\mid f^g=f \qquad \forall g\in G\}.\] 
Denote by $A$ the $p$-element normal subgroup of $G$, and $\hat A:=\hom_{\mathbb{Z}}(A,\F^\times)$ the group of characters of $A$; there is a non-canonic isomorphism between $\hat A$ and $A$. Denote by $B$ a $3$-element subgroup in $G$. The conjugation action of $B$ on $A$ induces an action on $\hat A$ given by 
$\chi^g(a)=\chi(gag^{-1})$, where $\chi\in \hat A$, $g\in B$, $a\in A$. The trivial character of $A$ is fixed by all elements in $B$, whereas the remaining characters are partitioned into  $l:=\frac{p-1}3$ $B$-orbits $O_1,\dots,O_l$ of size $3$. 
Up to isomorphism there are three $1$-dimensional $G$-modules, namely the three $1$-dimensional $B\cong G/A$-modules lifted to $G$. 
The remaining irreducible $G$-modules $V_1,...,V_l$ are in a natural bijection with the $l$-element set $\{O_1,\dots,O_l\}$. Namely the $3$-dimensional  $G$-module induced from a non-trivial character of $A$ is irreducible, and two such induced $G$-modules are isomorphic if and only if the $A$-characters we started with are in the same $B$-orbit $O_i$. 
It follows that it is possible to choose a  set of variables in the polynomial ring $\F[V]$ such that the variables are $A$-eigenvectors permuted by $G$ up to non-zero scalar multiples, moreover, the $B$-orbit of any variable spans an irreducible $G$-module summand in $V^*$. 
We shall always assume that our variables in $\F[V]$ satisfy these requirements. 
Then any monomial $m$ has a \emph{weight} $\theta(m)\in \hat A$ defined by $m^a=\theta(m)(a)m$ for all $a\in A$, and a \emph{weight sequence} 
$\Phi(m):=(\theta(x_1),\theta(x_2),\dots,\theta(x_d))$, where $m=x_1x_2\dots x_d$, and $x_1,x_2,\dots,x_3$ are 
not necessarily different variables; 
so $\Phi(m)$ is a sequence over the abelian group $\hat A$. We shall write $\hat A$ additively. 

We need to recall some terminology and facts about zero-sum sequences (see \cite{geroldinger-gao}, \cite{geroldinger-halterkoch} as a general reference). 
By a \emph{sequence over an abelian group} $C$ (written additively) we mean a sequence  
$S:=(c_1,\ldots,c_n)$ of elements  $c_i\in C$ where repetition of elements is allowed and their order is disregarded. 
The \emph{length} of $S$ is $|S|:=n$. 
By a \emph{subsequence} of $S$ we mean $S_J := (c_j\mid j\in J)$ for some subset $J\subseteq \{1,\ldots,n\}$. 
We write 
$S=S_1S_2$ if $S$ is the concatenation of its subsequences $S_1$, $S_2$. 
The set of \emph{partial sums} of $S$ is 
$\Sigma(S) := \{ \sum_{i\in I} c_i: I  \subseteq \{1,...,n \} \}$. 
We say that $S$ is a \emph{zero-sum sequence} if $c_1+\dots+c_n=0$. 
A zero-sum sequence is \emph{irreducible} if there are no non-empty zero-sum sequences $S_1$ and $S_2$ such that $S=S_1S_2$.  
Furthermore, $S$ is \emph{zero-sum free} if it has no non-empty zero-sum subsequence. 
We call the \emph{height of} $S$ the maximal multiplicity of an element in $S$. 
Given an element $c\in C$ and a positive integer $r$, write $(c^r)$ for the sequence of length $r$ in which $c$ occurs with multiplicity $r$. 
We collect for later reference some facts about zero-sum sequences over the cyclic group $Z_p$. 
The Cauchy-Davenport Theorem asserts that for any non-empty subsets $K,L\subseteq Z_p$
\begin{align} \label{cauchy} 
|\{k+l\mid k\in K,l\in L\}|\geq \min\{p,|K|+|L|-1\}
\end{align}
Vosper's theorem (see Theorem 5.9. in \cite{tao})  states
that equality in \eqref{cauchy} implies that $K$ and $L$ are arithmetic progressions of the same step, 
provided that $|K|,|L| \ge 2$ and $|K+L| \le p-2$. 
We shall repeatedly use the following easy consequence of the Cauchy-Davenport Theorem and Vosper's Theorem: 

\begin{lemma} \label{lemma:easy} 
If $S=(s_1,...,s_d)$ is a sequence of non-zero elements over $Z_p$ for a prime $p$ then $|\Sigma(S)| \ge \min \{ p, d+1 \}$. 
Moreover if $p-1 \ge |\Sigma(S)| = d+1$ then  $S= (-a^k,a^{d-k})$ for some $a \in Z_p \setminus \{ 0 \}$ and $0 \le k \le d$.
\end{lemma}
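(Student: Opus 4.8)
The plan is to prove both assertions via the Cauchy--Davenport Theorem, using induction on $d$ for the lower bound on $|\Sigma(S)|$ and then squeezing out the equality case with Vosper's Theorem.

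First I would establish the inequality $|\Sigma(S)| \ge \min\{p, d+1\}$ by induction on $d$. For $d=1$ the claim is clear since $S=(s_1)$ with $s_1 \ne 0$ gives $\Sigma(S) = \{0, s_1\}$, of size $2$. For the inductive step, write $S = S' (s_d)$ where $S' = (s_1,\dots,s_{d-1})$, and observe that $\Sigma(S) = \Sigma(S') \cup (\Sigma(S') + s_d) \supseteq \Sigma(S') \cup (\Sigma(S') + s_d)$. Applying Cauchy--Davenport with $K = \Sigma(S')$ and $L = \{0, s_d\}$ yields
\[
|\Sigma(S)| = |\Sigma(S') + \{0,s_d\}| \ge \min\{p, |\Sigma(S')| + 1\} \ge \min\{p, d+1\},
\]
using the inductive hypothesis $|\Sigma(S')| \ge \min\{p, d\}$ in the last step (with the $\min$ absorbing the case $|\Sigma(S')| = p$).

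For the equality statement, suppose $p-1 \ge |\Sigma(S)| = d+1$. Then in the chain of inequalities above, every step must be an equality; in particular $|\Sigma(S')| = d$ (so the inductive bound is tight at every stage) and $|\Sigma(S') + \{0,s_d\}| = |\Sigma(S')| + |\{0,s_d\}| - 1$ with $|\Sigma(S')+\{0,s_d\}| \le p-2$. If $d \ge 2$, then $|\Sigma(S')| = d \ge 2$, so Vosper's Theorem applies and forces both $\Sigma(S')$ and $\{0, s_d\}$ to be arithmetic progressions of the same step; since $\{0, s_d\}$ has step $\pm s_d$, we conclude $\Sigma(S')$ is an arithmetic progression with difference $\pm s_d$. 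Running this argument down the induction — at each stage $\Sigma(S_j)$ is an arithmetic progression and the newly adjoined element $s_{j}$ must be $\pm$ its common difference — one deduces that all the $s_i$ lie in $\{a, -a\}$ for a single $a \in Z_p \setminus \{0\}$; moreover, writing $k$ for the number of indices with $s_i = -a$, one checks $\Sigma(S) = \{-ka, -(k-1)a, \dots, (d-k)a\}$, which has exactly $d+1$ elements, so $S = (-a^k, a^{d-k})$ as claimed. The small cases $d \le 1$ are immediate.

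The main obstacle will be the bookkeeping in the equality case: one must carefully track that the arithmetic-progression structure propagates consistently down the induction and that the resulting set of exponents really is $\{-a,a\}$ rather than some other two-element generating configuration — in particular, ruling out that a later $s_j$ equal to the negative of the step could shift the progression in a way inconsistent with $|\Sigma(S)| = d+1$. Verifying that the size hypotheses $|\Sigma(S')| \ge 2$ and $|\Sigma(S') + \{0,s_d\}| \le p-2$ genuinely hold whenever we invoke Vosper (which follows from $|\Sigma(S)| = d+1 \le p-1$ together with $d \ge 2$) is a minor but necessary check. Everything else is a routine unwinding of the additive combinatorics.
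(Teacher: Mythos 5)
Your strategy --- Cauchy--Davenport applied inductively to $\Sigma(S)=\Sigma(S')+\{0,s_d\}$ for the lower bound, then Vosper's Theorem to pin down the equality case --- is exactly the argument the paper has in mind (the lemma is announced there as an ``easy consequence of the Cauchy--Davenport Theorem and Vosper's Theorem'' with no written proof), and your proof of $|\Sigma(S)|\ge\min\{p,d+1\}$ is correct. There is, however, a genuine off-by-one gap in the equality case. Vosper's Theorem, in the form quoted in Section~\ref{sec:prel}, requires $|K+L|\le p-2$, whereas your hypothesis only yields $|\Sigma(S')+\{0,s_d\}|=d+1\le p-1$; your closing claim that the needed size condition ``follows from $|\Sigma(S)|=d+1\le p-1$ together with $d\ge 2$'' is false precisely when $d+1=p-1$. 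For $|K+L|=p-1$ the critical pairs are not classified by the quoted statement (there one also has the exceptional pairs with $|K|+|L|=p$ and $K=Z_p\setminus(c-L)$), so the top step of your induction does not go through as written. This boundary case cannot be waved away: it is exactly the configuration $|\Phi(w)|\ge p-2$, $|\Sigma(\Phi(w))|\le p-1$ that the lemma is invoked for in case (ii) of the proof of Proposition~\ref{ZpZ3_dupla}.

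The repair is easy and stays inside your framework; in fact Vosper can be avoided altogether. In the inductive step for the structure statement, do not apply Vosper to the pair $(\Sigma(S'),\{0,s_d\})$; instead apply the induction hypothesis directly to $S'$ --- legitimate because $|\Sigma(S')|=d=|S'|+1\le p-1$ --- to get $S'=(-a^{k'},a^{d-1-k'})$, so that $\Sigma(S')$ is an arithmetic progression with difference $a$ and $d\le p-2$ terms. Since $\Sigma(S)=\Sigma(S')\cup(\Sigma(S')+s_d)$ has only $d+1$ elements, the translate $\Sigma(S')+s_d$ contributes exactly one new element, and two $d$-term progressions of the same difference $a$ in $Z_p$ with $2\le d\le p-2$ overlap in $d-1$ elements only if the shift is $\pm a$; hence $s_d=\pm a$ and $S=(-a^k,a^{d-k})$. (The base case $d=1$ is trivial.) Alternatively, the case $|\Sigma(S)|=p-1$ can be settled separately: for any two-term subsequence $T=(s_i,s_j)$ with complementary subsequence $T'$ one has $\Sigma(T)+\Sigma(T')=\Sigma(S)\ne Z_p$, so Cauchy--Davenport gives $|\Sigma(T)|\le p-|\Sigma(T')|\le p-(d-1)=3$, forcing $s_i=s_j$ or $s_i=-s_j$ for every pair $i,j$. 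Everything else in your write-up is sound.
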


Zero-sum sequences appear in the study of $\F[V]^G$ because there is a graded $\F[V]^G$-module surjection 
\[\tau:\F[V]^A\to \F[V]^G,\qquad f\mapsto \sum_{b\in B}f^b,\] 
and $\F[V]^A$ is spanned as an $\F$-vector space by the monomials $m$ such that $\Phi(m)$ is a zero-sum sequence over $\hat A$. 
The map $\tau$ is called the \emph{relative transfer map} (see for example Chapter 1 in \cite{benson} for its basic properties).

\section{The multiplicity free modules over $Z_p \rtimes Z_3$}

Throughout this section we assume that $V$ is a  \emph{multiplicity free} $G$-module; 
that is, $V= V_1 \oplus ... \oplus V_n$  is the direct sum of the pairwise non-isomorphic irreducible  $G$-modules $V_i$. 
This  direct decomposition gives an identification  $\F[V]=\F[V_1]\otimes \dots\otimes \F[V_n]$ 
and accordingly any monomial  $m\in \F[V]$ has a canonic factorization $m=m_{V_1}...m_{V_n}$ into monomials $m_{V_i} \in \F[V_i]$. 
We  write $\deg_{V_i}(m) := \deg(m_{V_i})$ for each $i = 1,...,n$.
Let $R:=\F[V]^G$ and $I:=\F[V]^A$. 
The degree $d$ homogeneous component of the graded algebras $R$, $I$
is denoted by $R_d$, $I_d$ and  
their maximal homogenous ideals by  $R_+$ and $I_+$, which are spanned by all  homogeneous components of positive degree. 
Moreover, $(R_+)_{\le p}$ stands for the subspace  $R_1 \oplus ... \oplus R_p$. 
For any subspaces $K,L\subset R$  the $\F$-subspace spanned by the set $\{kl\mid k\in K, l\in L\}$ is denoted by $KL$.

\begin{proposition} \label{ZpZ3_borzalom}
Let $V=V_1\oplus...\oplus V_n$ where each $V_i$ is a $3$-dimensional  irreducible $G$-module.
If $m \in I$ is a monomial such that $\deg(m) \ge p+1$ and $\deg_{V_i}(m) \ge 4$ for some $1 \le i \le n$ 
 then $m \in I_+(R_+)_{\le p}$. 
\end{proposition}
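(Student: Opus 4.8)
The goal is to show that a monomial $m\in I=\F[V]^A$ with $\deg(m)\ge p+1$ and $\deg_{V_i}(m)\ge 4$ for some $i$ lies in $I_+(R_+)_{\le p}$. The strategy is to split off a suitable invariant factor of low degree. Since $m$ is $A$-invariant, its weight sequence $\Phi(m)$ is a zero-sum sequence over $\hat A\cong Z_p$. The idea is to find a zero-sum subsequence $T$ of $\Phi(m)$ corresponding to a sub-monomial $m'\mid m$ with $\deg(m')\le p$ such that after applying the transfer map $\tau$ we get $\tau(m')\in R_+$ of degree $\le p$, while the complementary monomial $m/m'$ is still in $I_+$. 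The subtlety is that $\tau(m')=\sum_{b\in B}(m')^b$, and we need this to combine with $m/m'$ to reconstruct $m$ up to terms in $I_+(R_+)_{\le p}$; this is the standard "transfer trick," so the real work is choosing $m'$ with the right weight properties.

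**The combinatorial core.** The hypothesis $\deg_{V_i}(m)\ge 4$ says that among the three variables spanning $V_i^*$ (whose weights form a $B$-orbit $O_i=\{a,ra,r^2a\}$ in $\hat A$, an orbit of size $3$ summing to... not necessarily zero), at least $4$ appear counting multiplicity. The plan is to use these repeated variables from a single orbit $O_i$ together with Lemma~\ref{lemma:easy} applied to $\Phi(m)$. Concretely, I would argue as follows: consider the nonzero weights appearing in $\Phi(m)$; by Lemma~\ref{lemma:easy}, $|\Sigma(\Phi(m))|\ge\min\{p,\deg(m)+1\}=p$, so $\Sigma(\Phi(m))=Z_p$ and in particular $0\in\Sigma(\Phi(m))$ witnessed by many subsequences. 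The point of $\deg_{V_i}(m)\ge 4$ is that it forces a zero-sum subsequence $T$ with $|T|\le p$ whose corresponding monomial $m'$ is *not* fixed by $B$ — equivalently, the $B$-orbit of $m'$ has size $3$ — which is exactly what is needed so that $\tau(m')$ has a term equal (up to scalar) to $m'$ and the other two terms have strictly different weight-support behavior, letting us isolate $m=m'\cdot(m/m')$ modulo $I_+(R_+)_{\le p}$. If instead every such $m'$ were $B$-fixed, $\tau$ would behave badly; the role of a multiplicity-$\ge 4$ orbit is to guarantee enough "asymmetry." I would make the orbit-size argument precise by noting that a monomial supported on variables from $O_i$ with total degree $\ge 4$ but $\le p$ cannot have all three variables to equal powers unless $3\mid\deg$, and can be chosen $B$-unstable by a counting/pigeonhole argument on exponents.

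**Executing the transfer reduction.** Once $m'\mid m$ is found with $\deg(m')\le p$, $\deg(m/m')\ge 1$, $\Phi(m')$ a zero-sum sequence (so $m'\in I_+$), and the $B$-orbit of $m'$ of size $3$, I would write
\[
\tau(m')\cdot(m/m')=\sum_{b\in B}(m')^b\,(m/m')=m'\,(m/m')+\sum_{b\ne 1}(m')^b(m/m')=m+\big(\text{other terms}\big).
\]
The other terms $(m')^b(m/m')$ for $b\ne 1$ are monomials of the same total degree as $m$, and one shows they are *not* $A$-invariant in general — but that is fine: we instead apply $\tau$ to the whole expression, or better, observe $\tau(m')\in R_+$ while $m/m'$ need not be invariant, so the correct identity to use is inside $I$, not $R$. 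The clean approach: $m'\in I_+$, so consider $\tau(m')\,\tau(m/m'')$ type products — actually since only $m$ itself is $A$-invariant, I would instead split $\Phi(m)=\Phi(m')\Phi(m'')$ into *two* zero-sum pieces (possible because $\Sigma(\Phi(m))=Z_p$ gives a zero-sum split with one part of size $\le p$), giving $m=m'm''$ with both in $I_+$, then use $\tau(m')\in (R_+)_{\le p}$ and $m''\in I_+$ to write $\tau(m')m''\equiv m\pmod{\text{lower-weight corrections}}$, and iterate/subtract the correction terms by a degree-or-weight induction.

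**Main obstacle.** The hard part will be handling the "correction terms": after forming $\tau(m')m''$ we recover $m$ plus a sum of monomials $(m')^bm''$ with $b\ne 1$, which lie in $\F[V]$ but typically not in $I_+(R_+)_{\le p}$ directly, so one must either (a) choose $m'$ so cleverly that these terms are themselves in $I$ and of a form already known to be in $I_+(R_+)_{\le p}$ (e.g. they again satisfy the proposition's hypotheses, enabling induction on $\deg(m)$ or on the height of $\Phi(m)$), or (b) show the correction terms, summed with appropriate coefficients over $B$, telescope. I expect the proof will run an induction: order monomials of fixed degree by some statistic (e.g. the multiset $\{\deg_{V_j}(m)\}_j$ in a suitable order, or the height of $\Phi(m)$), show each correction term is strictly smaller in that order while still satisfying $\deg\ge p+1$ and the "some $\deg_{V_j}\ge 4$" condition, and close the induction at the base case where the condition barely holds. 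Pinning down that the correction terms keep the key hypotheses — and do not drop below $\deg = p+1$ or lose the multiplicity-$4$ orbit — is where Vosper's theorem (the second half of Lemma~\ref{lemma:easy}, classifying the extremal sequences as $(-a^k,a^{d-k})$) should enter, ruling out the degenerate configurations.
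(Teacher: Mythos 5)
Your high-level strategy---factor $m=uv$ into two $A$-invariant monomials with $\deg(v)\le p$, use $u\tau(v)\in I_+(R_+)_{\le p}$, and then dispose of the correction terms $uv^g$, $uv^{g^2}$---is indeed the skeleton of the paper's proof. But the proposal stops exactly where the actual work begins, and two of your guiding ideas point in the wrong direction. First, a small but telling confusion: the correction terms $(m')^b(m/m')$ \emph{are} $A$-invariant monomials (a $B$-conjugate of an $A$-invariant is $A$-invariant since $A$ is normal), so there is no need for the detour through ``applying $\tau$ to the whole expression''; the identity lives in $I$ from the start. Second, and more importantly, your proposed way of killing the correction terms---an induction on degree or on the multiset $\{\deg_{V_j}(m)\}$, with the terms ``again satisfying the proposition's hypotheses''---cannot close: the correction terms have the \emph{same} degree and the same $\deg_{V_j}$-profile as $m$, so no such statistic decreases. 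The paper's mechanism is different and is the key idea you are missing: it fixes a degree-$4$ divisor $w$ of $m_{V_t}$ (where $\deg_{V_t}(m)$ is maximal), classifies $w$ by the multiplicity pattern $\lambda(w)\in\{(3,1),(2,2),(2,1,1),(1,1,1,1)\}$ of its weights, and uses as the terminal case the fact that the degree-$3$ invariant $xyz\in R_3$ divides $m$ whenever all three variables of $V_t$ occur in $w$. The factorizations $m=uv$ are then chosen so that applying $g$ or $g^2$ to $v$ cyclically permutes $x\mapsto y\mapsto z$ and forces $xyz$ to divide the correction terms, dropping them into that terminal case. Without this orbit-permutation observation the ``telescoping'' you hope for in option (b) does not happen.

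The second genuine gap is that you treat the existence of the zero-sum factorization $m=uv$ with $\deg(v)\le p$ as essentially automatic from $|\Sigma(\Phi(m))|=p$. Most of the paper's proof is devoted precisely to the situations where the \emph{needed} partial sum (e.g.\ $-\theta(xy)$ or $-\theta(x^2y)$) is \emph{not} attained by $\Sigma(\Phi(m/w))$. Handling these requires classifying the extremal weight sequences via the equality case of Lemma~\ref{lemma:easy} (Vosper), Balandraud's theorem on $\Sigma(S)$ when $S\cap -S=\emptyset$, and the Freeze--Smith bound $|\Sigma(S)|\ge 2|S|-h(S)+1$ for zero-sum-free $S$, and then ruling out the resulting shapes $(a^{p-2})$, $(a^{p-3})$, $(a^{p-4},b)$, $((-a)^i,a^{p-3-i})$ one by one using the structure of the $B$-orbits (e.g.\ that $\theta(x)\ne-\theta(y)$ for variables in the same $V_t$, and a special analysis at $p=7$). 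Your proposal gestures at Vosper for a different purpose (controlling correction terms) but does not identify these obstructed configurations, which is where the additive-combinatorial content of the theorem actually sits. As written, the argument is a plausible plan rather than a proof.
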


\begin{proof} 
Let $t$ be the index for which $\deg_{V_t}(m)$ is maximal; then $\deg_{V_t}(m) \ge 4$. 
Let $\F[V_t] = \F[x,y,z]$ where the variables $x,y,z$ are $A$-eigenvectors  cyclically permuted by $B$. 
Note that $xyz$ is a $G$-invariant. 
For a monomial $m\in \F[V]$ let $\lambda(m)$ denote the non-increasing sequence of integers 
$(\lambda(m)_1,\dots,\lambda(m)_h)$ where $\lambda(m)_i$ is the number of elements of $\hat A$ 
which have multiplicity at least $i$ in the weight sequence $\Phi(m)$ 
(here $h$ is the maximal multiplicity of a weight in $\Phi(m)$).  
Choose a divisor $w\mid m_{V_t}$  such that $\deg(w)=4$ and $\lambda(w)$ is minimal possible with respect to the anti-lexicographic ordering. 
We have several cases:

(i) If $\lambda(w) =(3,1)$ then $w$ contains  $xyz \in R_3$ hence $m \in I_+R_3$

(ii) If $\lambda(w) = (2,2)$, say $w=x^2y^2$ and $- \theta(xy) \in \Sigma(\Phi(m/w))$ 
then we can find an $A$-invariant monomial $v$  such that $xy \mid v \mid m$ and
the zero-sum sequence $(\theta(xy))\Phi(v/xy)$ is irreducible. We claim moreover that $\deg(v) \le p$. 
For otherwise $\deg(v) = p+1$ and $\Phi(v/xy)=(c^{p-1})$ where $c = \theta(xy)$. 
By the maximality of $\deg_{V_t}(m)$ this weight $c$ cannot belong to any  irreducible component different from $V_t$. 
Moreover, by the minimality of $\lambda(w)$ the weight $c$ must coincide with $\theta(x)$ or $\theta(y)$. 
But then either $\theta(y)=0$ or $\theta(x)=0$, respectively, which is a contradiction. 
Now set $u=m/v$ and observe that $u \tau(v) \in I_+(R_+)_{\le p}$ while 
$m - u\tau(v) \in I_+R_3$ by case (i).

(iii) If $\lambda(w) = (2,2)$, say $w=x^2y^2$ but $- \theta(xy) \not\in \Sigma(\Phi(m/w))$;
 suppose in addition that  $S \cap -S \neq \emptyset$ where $S \subseteq \hat{A}$ denotes the set of weights occurring in $\Phi(m/w)$: 
 then $\Phi(m/w)$ contains a subsequence $(s,-s)T$ where $s \in \hat{A}\cong Z_p$ and $|T| = p-5$.
Given that $|\Sigma(T)|\ge p-4$ by Lemma~\ref{lemma:easy} and $-\theta(xy)\notin \Sigma(T)$,  
at least one of the weights $-\theta(x),-\theta(y),-\theta(x^2y),-\theta(xy^2)$  must occur in $\Sigma(T)$. 
Hence  $m=uvw$,   where $u,v,w$ are $A$-invariant, $\Phi(v)=(s,-s)$ and 
$x\mid w$, $xy^2\mid u$ by symmetry in $x$ and $y$. Now as $\deg(vw)\le 2+|T|+3 \le p$ we have the relation
\[ 3(vw - v^gw^{g^2}) = (v-v^g)\tau(w) + (w-w^{g^2})\tau(v) + \tau(vw - vw^g) \in I(R_+)_{\le p}\] 
whence $uvw - uv^gw^{g^2} \in I_+(R_+)_{\le p}$  holds, 
while  $uv^gw^{g^2}$  falls under case (i).

(iv) If $\lambda(w) = (2,2)$, say $w=x^2y^2$, $- \theta(xy) \not\in \Sigma(\Phi(m/w))$ and $S \cap -S = \emptyset$;
then  $|\Sigma(\Phi(m/w))|\le p-1$ so that $|\Phi(m/w)| \in \{p-3,p-2\}$ by Lemma~\ref{lemma:easy} and the assumption on $\deg(m)$.
As $S \cap -S = \emptyset$ we can apply Balandraud's theorem (see Theorem 8 in ~\cite{balandraud})
stating that $|\Sigma(\Phi(m/w))| \ge 1+\nu_1 + 2\nu_2 + ... +k \nu_k$ where 
$\nu_1 \ge ... \ge \nu_k$ are the multiplicities of the different elements of $\hat A$ occurring in $\Phi(m/w)$. 
Given that $|\Sigma(\Phi(m/w))| \le p-1$ this forces that
$\Phi(m/w)$ must have one the following three forms for some elements $a,b \in \hat{A} \cong Z_p$: 
\[(a^{p-2}) \qquad (a^{p-3}) \qquad  (a^{p-4},b). \]
By the maximality of $\deg_{V_t}(m)$ it is evident 
that $a\in\{ \theta(x),\theta(y)\}$ | except if $p=7$ and $|\Phi(m/w)|=4$. 
But this  also holds in this latter case, too, for otherwise if $a\notin \{\theta(x),\theta(y)\}$ then, 
as  $\hat{A} \setminus \{ 0 \}$ consist of only two $B$-orbits for $p=7$ and $S \cap -S = 0$ by assumption,
it is necessary that $a = -\theta(z) = \theta(xy)$; 
since $\Phi(m)$ is a zero-sum sequence this rules out the possibility  $\Phi(m/w) = (a^4)$, 
so it remains that $\Phi(m/w) = (a^3, b)$ where by the same reason we must have $b = -5a = -2\theta(z)$; 
given that a generator $g$ of $B$ operates on  $\hat A$  by multiplication with $2$, 
this implies that $b \in \{ -\theta(x) , - \theta(y) \}$, a contradiction. 
So from now on we may suppose that  $a=\theta(x)$ (since the case $a=\theta(y)$ is similar). 

\begin{enumerate}
\item[(a)] If $\Phi(m/w) = a^{p-2}$ then $\Phi(m)=(a^{p}, \theta(y)^2)$ and as $\Phi(m)$
is a zero-sum sequence it follows that $\theta(y)=0$, a contradiction.

\item[(b)] If $\Phi(m/w) = a^{p-3}$ then  $\Phi(m)= (a^{p-1}, (ra)^2)$ is a zero-sum sequence, where $r$ has order $3$ modulo $p$. 
Consequently $-a+ 2r a = 0 $ whence $2r \equiv 1 $ modulo $p$. Given that $r^3\equiv 1$ modulo $p$ it follows that $p= 7$, 
and $r=4$. Then $m=uv$, where $\Phi(u)=\Phi(v)=(a^3,4a)$, hence $u\tau(v)\in I_+R_4$ and all monomials of $u\tau(v)-m$ fall under case (i), hence belong to $I_+R_3$. 

\item[(c)] If  $\Phi(m/w) = (a^{p-4},b)$ then  $a \neq \theta(xy)$ as $\theta(y) \neq 0$, 
thus the sequence $S:= (a^{p-4},b, \theta(xy))$ has height $h(S) = p-4$.
Therefore a nonempty zero-sum sequence $T \subseteq S$ exists,
for otherwise if $S$ were zero-sum free then  by a result of  Freeze and Smith \cite{freeze-smith} 
we have $|\Sigma(S)| \ge 2|S| -h(S) +1=p+1$, a contradiction.  
Moreover $T$ cannot contain $\theta(xy)$ since we assumed that $-\theta(xy) \not\in \Sigma(\Phi(m/w))$. 
It follows that $T=(a^j, b)$ for some $0 \le j \le p-4$. 
Here  $j\neq 0$ since $b \neq 0$. 
Similarly $j \neq p-4$, for otherwise $\theta(x^2y^2) = 0$ whence $\theta(x) = -\theta(y)$ follows, 
in contradiction with the fact that the variables $x$ and $y$  belong to the same representation $V_t$.
This way we obtained a factorization $m=uv$ where $\Phi(v) = T$ and $u\tau(v) \in I_+(R_+)_{\le p-4}$ while in the same time 
$m-u\tau(v) \in I_+ R_3 + I_+ (R_+)_{\le p}$ by case (ii) or (i).
\end{enumerate}

(v) If $\lambda(w) = (2,1,1)$, say $w=x^3y$: 
Here $\theta(x^2y) \neq \theta(x)$, for otherwise $\theta(x) = - \theta(y)$, a contradiction as above.
Moreover $\theta(xy)$ is different from both $\theta(x^2y)$ and $\theta(x)$, for otherwise  $\theta(x)=0$ or $\theta(y) = 0$. 
Now we have two cases: 

\begin{enumerate}
\item[(a)] If $\Sigma(\Phi(m/w))$ contains $-\theta(x^2y)$ or $-\theta(x)$ then 
we get a $A$-invariant factorization $m=uv$  such that $x^2y \mid u$ and $x \mid v$.
Here we can assure in addition that $v$ is irreducible, so that $\deg(v) \le p$. 
Then $u \tau(v) \in I_+(R_+)_{\le p}$ while the monomials occurring in $m -u\tau(v)$ both fall under case (i) - (iv), and we are done.

\item[(b)]
If however $-\theta(x) , -\theta(x^2y)  \not\in  \Sigma(\Phi(m/w))$ then 
 $|\Sigma(\Phi(m/w))| \le p-2$ while   $|\Phi(m/w)|  \ge p-3$ by assumption.  
In view of Lemma \ref{lemma:easy} this situation  is only possible    if   $|\Phi(m/w)| = p-3$ and $|\Sigma(\Phi(m/w))| = p-2$, 
so that $\Phi(m/w) = ((-a)^i, a^{p-3-i})$ for some   $a\in \hat A \setminus \{ 0\}$ and $0 \le i \le p-3$.
It is also necessary that $-\theta(xy) \in  \Sigma(\Phi(m/w))$,
so a  factorization $m=uv$ exists where $u,v$ are $A$-invariant monomials,  $x^2\mid u$, $xy \mid v$. 
Clearly $u\neq x^2$, $v\neq xy$,   and $\deg(v)\le p$. 
Therefore $u\tau(v) \in I_+(R+)_{\le p}$ and one of the monomials in $m - u\tau(v)$, say $uv^{g}$  falls under case (i) 
while the other one, $m':= uv^{g^2}$ falls under case (v/a), as here $w':= x^3z$ and $\Phi(m'/w')$ does not consist of 
at most two weights which are negatives of each other.
\end{enumerate}

(vi) $\lambda(w) =(1,1,1,1)$, say  $w=x^4$:
then $|\Sigma(\Phi(m/x^2))| =p $ by Lemma~\ref{lemma:easy}, 
so that $- \theta(x) \in \Phi(m/x^2)$ and this gives us then an $A$-invariant factorization $m=uv$
such that $x \mid u$, $x \mid v$, and $v$ is irreducible,
hence $u\tau(v) \in I_+(R_+)_{\le p}$
whereas the monomials in $m - u\tau(v)$ both fall under cases (i)--(iv). 
 \end{proof}

\begin{corollary} \label{cor:zpz3}
Let $V = V_1 \oplus ... \oplus V_l$ be the direct sum of all pairwise non-isomorphic $3$-dimensional irreducible $G$-modules
, where $l=\frac{p-1}{3}$. 
Then $\F[V]^A_+$ as a module over $\F[V]^G$ is generated by  elements of degree at most $p$.
\end{corollary}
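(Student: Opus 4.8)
The plan is to deduce Corollary~\ref{cor:zpz3} from Proposition~\ref{ZpZ3_borzalom} by an induction on degree, reducing any invariant monomial of large degree to one that is ``balanced'' in the sense that $\deg_{V_i}(m)\le 3$ for every $i$. First I would take a monomial $m\in \F[V]^A_+ = I_+$; if $\deg(m)\le p$ there is nothing to prove, since then $m\in (R_+)_{\le p}$ already generates itself over $R$. So assume $\deg(m)\ge p+1$. If $\deg_{V_i}(m)\ge 4$ for some $i$, then Proposition~\ref{ZpZ3_borzalom} gives $m\in I_+(R_+)_{\le p}$; writing this membership out, $m$ is an $R$-linear combination of products $m' r$ with $m'\in I_+$ of strictly smaller degree than $m$ and $r\in (R_+)_{\le p}$. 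By the induction hypothesis each such $m'$ lies in the $R$-submodule of $I_+$ generated by elements of degree at most $p$, hence so does $m$. The induction closes once we handle the base case of balanced monomials.

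The remaining, and genuinely new, case is therefore: $\deg(m)\ge p+1$ but $\deg_{V_i}(m)\le 3$ for all $i$. Here the point is a counting argument on the weight sequence. Since each $V_i$ is $3$-dimensional with variables $x_i,y_i,z_i$ cyclically permuted by $B$ and $\theta(x_i)+\theta(y_i)+\theta(z_i)=0$, a monomial $m_{V_i}$ of degree at most $3$ either is a power of $x_iy_iz_i$ (degree $0$ or $3$, contributing nothing new) or has a weight sequence $\Phi(m_{V_i})$ all of whose entries are non-zero after cancelling copies of the invariant $x_iy_iz_i$. Stripping off the factors $x_iy_iz_i$ from each block, we may assume every entry of $\Phi(m)$ is non-zero; the resulting monomial still has degree $\ge p+1 - (\text{number of stripped triples})$, and if after stripping the degree drops to $\le p$ we are done as above, so assume it is still $\ge p+1$. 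Now $\Phi(m)$ is a zero-sum sequence over $Z_p$ of length $\ge p+1$ with all entries non-zero. By Lemma~\ref{lemma:easy}, $|\Sigma(\Phi(m/x_i))| = p$ for any variable $x_i$ dividing $m$, so $-\theta(x_i)\in \Sigma(\Phi(m/x_i))$; this yields an $A$-invariant factorization $m = uv$ with $x_i\mid u$, $x_i\mid v$, $v$ irreducible (hence $\deg(v)\le p$) and $\deg(u)<\deg(m)$. Then $u\tau(v)\in I_+(R_+)_{\le p}$, and each monomial $m'$ occurring in $m - u\tau(v) = u(v - \tau(v))$ satisfies $\deg(m') = \deg(m)$ but, crucially, has a \emph{different}, genuinely smaller invariant-free structure — indeed $m'$ has some $\deg_{V_j}(m')\ge 4$, falling under the already-settled case of Proposition~\ref{ZpZ3_borzalom}, or can be made so after the stripping normalisation.

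Assembling these pieces: I would phrase the induction on $\deg(m)$, with inner induction (or a well-ordering argument on $\lambda(m)$) to handle the rewriting $m\mapsto m - u\tau(v)$ within a fixed degree, exactly mirroring the bookkeeping in the proof of Proposition~\ref{ZpZ3_borzalom}. The main obstacle I anticipate is the balanced base case: ensuring that after the factorization $m=uv$ the error term $m - u\tau(v)$ really does decompose into monomials to which either Proposition~\ref{ZpZ3_borzalom} or a strictly simpler instance of the Corollary applies, rather than cycling back to a monomial of the same balanced shape. The cleanest way around this is to observe that applying $g\in B$ to one tensor factor $v$ while leaving $u$ fixed necessarily unbalances that factor (turns $\deg_{V_t}$ from $\le 3$ into $\ge 4$ for the relevant $t$, since $v^g$ spreads weight onto a different variable of $V_t$), so $m - u\tau(v)$ lands squarely in the scope of Proposition~\ref{ZpZ3_borzalom}; combined with the observation that an invariant $xyz$ can always be peeled off to drop the degree, the induction terminates.
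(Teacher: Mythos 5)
Your first paragraph is, in essence, the paper's entire proof, but you have missed the counting fact that makes it the \emph{whole} proof: here $V$ is the direct sum of \emph{all} $l=\frac{p-1}{3}$ pairwise non-isomorphic $3$-dimensional irreducibles, so $V$ has exactly $3l=p-1$ variables, and any monomial with $\deg_{V_i}(m)\le 3$ for every $i$ satisfies $\deg(m)=\sum_{i=1}^{l}\deg_{V_i}(m)\le 3l=p-1$. Hence a monomial $m\in I_+$ with $\deg(m)\ge p+1$ automatically has $\deg_{V_t}(m)\ge 4$ for some $t$, Proposition~\ref{ZpZ3_borzalom} applies, and the degree induction you set up closes at once. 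Your ``balanced base case'' is vacuous, and the second and third paragraphs of your argument are unnecessary. (A small slip in the first paragraph: for $\deg(m)\le p$ the monomial $m$ lies in $I_+$, not in $(R_+)_{\le p}$; it is simply one of the proposed module generators.)

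This matters because the argument you offer for the balanced case is not correct. A generator $g$ of $B$ permutes the variables of each $V_t$ among themselves (up to nonzero scalars), so $\deg_{V_t}(v^g)=\deg_{V_t}(v)$ for every $t$, and therefore $\deg_{V_t}(uv^g)=\deg_{V_t}(uv)=\deg_{V_t}(m)\le 3$. Replacing $v$ by $v^g$ never ``unbalances'' the monomial: the error terms $uv^g$ and $uv^{g^2}$ are exactly as balanced as $m$, your rewriting cycles among balanced monomials of the same degree, and the claimed termination fails. Stripping off invariant factors $x_iy_iz_i$ does not repair this, since it only lowers the degree. That the balanced situation is a genuine difficulty, not a routine one, is visible elsewhere in the paper: for $V^{\oplus 2}$, where balanced monomials of degree $\ge p+1$ really do exist, the paper needs the separate and considerably more delicate Proposition~\ref{ZpZ3_dupla} (with an exceptional case at $p=7$) to deal with them. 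So your conclusion is saved only by the accident that the case you argue incorrectly is empty for this particular $V$.
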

\begin{proof}
For any monomial $m \in \F[V]^A$ of degree $\deg(m) \ge p+1$ there must be an index $1 \le t \le l$ 
such that $\deg_{V_t}(m) \ge 4$, hence  Proposition~\ref{ZpZ3_borzalom} applies.
\end{proof}

We turn now to the $G$-module $V^{\oplus 2}$, where $V$ is as in Corollary~\ref{cor:zpz3}. First of all we fix a direct decomposition
\begin{align}\label{V2}
V^{\oplus 2} = U_1 \oplus ... \oplus U_l \quad \text{ where } U_i = V_{i,1} \oplus V_{i,2} \cong V_i^{\oplus 2} \text{ for all } i =1,...,l
\end{align}
Extending our previous conventions, the set of variables in the ring $\F[V^{\oplus 2}]$ is the union of 
the $A$-eigenbases of each $V_{i,j}^*$ permuted cyclically by $B$; 
 the divisors $m_{V_{i,j}} \in \F[V_{i,j}]$ of a monomial $m \in \F[V^{\oplus 2}]$ are defined accordingly.
Finally, the indices $i$ will be chosen for convenience in such a way that 
 the $A$-weights occurring in  $U_i$ and $U_{l-i}$ are the negatives of each other for each $i=1,...,l/2$.

\begin{proposition} \label{ZpZ3_dupla}
Let  $I := \F[V^{\oplus 2}]^A$ and $R := \F[V^{\oplus 2}]^G$. 
For any monomial $m\in I$   with $\deg(m) \ge p+1$ it holds that 
$m \in I_2I_+^2 + I_+(R_+)_{\le p} $,
except if $p=7$ and $\Phi(m)=(a^ 6,3a,5a)$ for some $a \in Z_p \setminus \{ 0\}$.
\end{proposition}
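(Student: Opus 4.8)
The plan is to strip away easy sub-cases one at a time until the remaining configuration is either covered by Proposition~\ref{ZpZ3_borzalom} or can be nudged, by applying the transfer map $\tau$ together with the three-term relation from the proof of Proposition~\ref{ZpZ3_borzalom}(iii), into a monomial whose weight sequence contains two disjoint \emph{inverse pairs} $(a,-a)$, $(b,-b)$, which instantly lands it in $I_2I_+^2$. First I would invoke Proposition~\ref{ZpZ3_borzalom} with $V^{\oplus2}$ regarded as the sum of the $2l$ three-dimensional irreducibles $V_{i,j}$: it gives $m\in I_+(R_+)_{\le p}$ as soon as $\deg_{V_{i,j}}(m)\ge4$ for some $i,j$, so I may assume $\deg_{V_{i,j}}(m)\le3$ throughout, equivalently that the weights of $\Phi(m)$ lying in any one $B$-orbit of $\hat A$ have total multiplicity at most $6$ (in particular every single weight has multiplicity $\le6$). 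Since $\deg(m)\ge p+1$ exceeds the Davenport constant $\D(Z_p)=p$, the zero-sum sequence $\Phi(m)$ is reducible (so $m\in I_+^2$, at least), and by Lemma~\ref{lemma:easy} one has $\Sigma(\Phi(m)/x)=Z_p$ for every variable $x\mid m$.

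Next I would record the two ways of producing an $I_2I_+^2$-membership. If $\Phi(m)=(a,-a,b,-b)\,\Phi(m')$ then $m$ is divisible by a product of two degree-two $A$-invariant monomials with nonconstant complementary factor (its degree is $\ge p-3$), so $m\in I_2I_+^2$. The same conclusion survives the moves: (i) for a nonconstant $A$-invariant $v\mid m$ with $\deg v\le p$ and $u:=m/v\ne1$, the identity $u\tau(v)=uv+uv^{g}+uv^{g^2}$ gives $m\equiv-uv^{g}-uv^{g^2}\pmod{I_+(R_+)_{\le p}}$; (ii) for a factorisation $m=v_1v_2w$ with $v_1,v_2,w$ all $A$-invariant, $w$ nonconstant, and $\deg(v_1v_2)\le p$, the relation $3(v_1v_2-v_1^{g}v_2^{g^2})\in I(R_+)_{\le p}$ gives $m\equiv v_1^{g}v_2^{g^2}w\pmod{I_+(R_+)_{\le p}}$. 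So it is enough to manufacture, via such moves, a monomial carrying two disjoint inverse pairs in its weight sequence. If $\Phi(m)$ already has them I am finished; otherwise a short count leaves exactly two possibilities: (C) $\supp\Phi(m)\cap(-\supp\Phi(m))=\{a_0,-a_0\}$ with, say, $-a_0$ occurring once in $\Phi(m)$; or (B) $\supp\Phi(m)\cap(-\supp\Phi(m))=\emptyset$.

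In case (C) I would write $m=xy\,m'$ with $\theta(x)=a_0=-\theta(y)$, so $xy\in I_2$ while $\Phi(m')$, obtained by deleting the lone $-a_0$ and one $a_0$, is inverse-pair-free with $\deg m'\ge p-1$. If $\Phi(m')$ is reducible, $\Phi(m')=\Phi(m_1')\Phi(m_2')$ and $m=(xy)\,m_1'm_2'\in I_2I_+^2$. If $\Phi(m')$ is an atom its length is $p-1$ or $p$, and the classification of (near-)maximal atoms over $Z_p$ (namely $(a^p)$ for length $p$, and $(a^{p-2},2a)$ for length $p-1$) shows that a single weight then occurs in $\Phi(m)$ with multiplicity at least $p-2$; inspecting the few possible positions of $a_0$ and $-a_0$ relative to the orbit of that weight (for $p=7$, where $V^{\oplus2}$ has only two, mutually dual, orbits, this needs a moment's care) one finds some $\deg_{V_{i,j}}(m)\ge4$, contradicting the first reduction. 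Hence case (C) ends with $m\in I_2I_+^2$.

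Case (B), where $\Phi(m)$ has no inverse pair at all, is the heart of the matter and the main obstacle. Writing $S:=\supp\Phi(m)$, for each dual pair of orbits $\{O_i,-O_i\}$ the sets $S\cap O_i$ and $-(S\cap(-O_i))$ are disjoint subsets of $O_i$, so $|S\cap O_i|+|S\cap(-O_i)|\le3$, $|S|\le(p-1)/2$, and together with $\deg m\ge p+1$ some weight has multiplicity $\ge3$. My aim would be to choose a nonconstant $A$-invariant $v\mid m$ of degree $\le p$ with nonconstant cofactor $u=m/v$ such that \emph{both} twists $uv^{g}$ and $uv^{g^2}$ acquire two disjoint inverse pairs; since $\Phi(u)$ and each $\Phi(v^{g^j})$ are individually inverse-pair-free, for $uv^{g}$ this demands two distinct weights $c$ realised in $u$ whose translate $-r^{2}c$ is realised in $v$, and symmetrically (with $-r^{2}$ replaced by $-r$) for $uv^{g^2}$. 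Taking $v$ to be a union of a few atoms of $\Phi(m)$ (or a power of one variable times a short completing atom), this comes down to locating two orbit-pairs each meeting $S$ in exactly two of its weights, with the missing one being the negative of the chosen $c$. The crux is then a case analysis, organised by how $S$ spreads over the orbit-pairs and using the total-multiplicity bound $\le6$ per orbit and the equality $\Sigma(\Phi(m)/x)=Z_p$ (and, when a single $v$ does not suffice, a short iteration of moves (i)--(ii)), showing that such a $v$ exists in all cases but one. The pinch is $p=7$: then $l=2$, the two orbits are mutually dual, so there is effectively just one orbit-pair to exploit, and the unique configuration admitting no workable $v$ turns out to be precisely $\Phi(m)=(a^{6},3a,5a)$, the advertised exception. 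In every other case some move yields two disjoint inverse pairs, giving $m\in I_2I_+^2+I_+(R_+)_{\le p}$ and completing the proof.
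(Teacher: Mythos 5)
Your opening reduction matches the paper exactly: invoke Proposition~\ref{ZpZ3_borzalom} to assume $\deg_{V_{i,j}}(m)\le 3$ for all $i,j$, hence $\deg_{U_i}(m)\le 6$. Your case (C) is also essentially the paper's case (i): a monomial whose weight sequence contains an inverse pair $(a_0,-a_0)$ lies in $I_2I_+^2$ because the cofactor of length $\ge p-1$ must be reducible, the only atoms of length $\ge p-1$ being $(a^p)$ and $(a^{p-2},2a)$, which would violate the multiplicity bound (with the extra orbit bookkeeping at $p=7$). Up to that point the argument is sound.

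The genuine gap is your case (B), which is where the entire content of the proposition lives. You reduce everything to the claim that when $\supp\Phi(m)\cap(-\supp\Phi(m))=\emptyset$ one can find an $A$-invariant factorization $m=uv$, $\deg(v)\le p$, such that \emph{both} twists $uv^{g}$ and $uv^{g^2}$ acquire two disjoint inverse pairs --- and then you state that ``a case analysis shows such a $v$ exists in all cases but one.'' That case analysis is never performed, and the exceptional sequence $(a^6,3a,5a)$ is asserted rather than derived. Moreover your target is stronger than what is achievable or needed: the paper does not produce two disjoint inverse pairs in each twist. Its mechanism is this: by pigeonhole over the $(p-1)/6$ dual pairs of isotypic components, some $U_t\oplus U_{l-t}$ carries degree $\ge 7$, so both $\deg_{U_t}(m)\ge 1$ and $\deg_{U_{l-t}}(m)\ge 1$. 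If $\Phi(m_{U_t})$ contains two distinct weights $b_1\ne b_2$ of $O_t$ and $v_0$ is a variable of $m$ with weight $c\in O_{l-t}=-O_t$, then (since $m\notin I_2I_+$ forces $c\notin\{-b_1,-b_2\}$) both $rc$ and $r^2c$ lie in $\{-b_1,-b_2\}$, so each twist picks up exactly \emph{one} inverse pair and is then finished by the case-(i) upgrade $I_2I_{\ge p-1}\subseteq I_2I_+^2$; establishing that the needed factorization exists (i.e.\ that $-\theta(v_0)\in\Sigma(\Phi(m/u_0v_0))$) is itself a nontrivial application of Lemma~\ref{lemma:easy}, and its failure mode at $p=7$ is precisely what yields $(a^6,3a,5a)$. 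The remaining subcase, where both $\Phi(m_{U_t})$ and $\Phi(m_{U_{l-t}})$ are powers of a single weight, needs a separate splitting argument. None of this is recoverable from your sketch, and your fallback of ``a short iteration of moves (i)--(ii)'' gives no guarantee of termination or of avoiding circularity between cases. As written, the proposal is a plausible plan whose central step is missing.
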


\begin{proof}
If $\deg_{V_{i,j}}(m) \ge 4$ for any $i=1,...,l$ and $j=1,2$
then $m \in I_+(R_+)_{\le p}$ holds  by Proposition~\ref{ZpZ3_borzalom}.
So for the rest we may suppose that $\deg_{V_{i,j}}(m) \le 3$ for each $i,j$, 
hence $\deg_{U_i}(m) \le 6$ for each $i$. 
On the other hand $\deg_{U_t \oplus U_{l-t}}(m) \ge 7$ for some $t = 1,...,\frac{p-1}{6}$
because $\deg(m) \ge p+1$. 
As a result  $\deg_{U_t}(m) \ge 1$ and $\deg_{U_{l-t}} (m) \ge 1$, 
hence by symmetry, we may suppose that $ m_{V_{t,1}}$ and $m_{V_{l-t,1}}$ are both non-constant monomials. 
Here we have three cases:

(i) if $m \in I_2 w$ for a monomial $w \in I_{\ge p-1}$ then by assumption
$\deg_{V_{i,j}}(w) \le 3$ must hold for each $i,j$, too. We claim that in this case $w \in I_+^2$. 
For otherwise $\Phi(w)$ is an irreducible zero-sum sequence of length at least $p-1$, 
hence it is easily seen e.g. using Balandraud's above mentioned theorem that it must be of the form  
$(a^p)$ or $(a^{p-2},2a)$ for some $a \in Z_p \setminus \{ 0 \}$.
Denoting by $s$ the index for which the weight $a$ belongs to the isotypic component $U_s$, 
we will have $\deg_{U_s}(w) \ge p-2 \ge 7$,  a contradiction, provided that $p > 7$. 
If however $p=7$ then observe that $2$ has order $3$ modulo $7$,
hence the weight $2a$ belongs to the same component $U_s$, and since $m \in I_2 w$ we get 
$\deg_{U_s}(m) = \deg_{U_s}(w)+1 \ge 7$, a contradiction again.

(ii) if $m \not\in I_2 I_+$ and $\Phi(m_{U_t})$ contains  two different weights 
    (or $\Phi(m_{U_{l-t}})$ does so, which is a similar case) then 
take divisors $u_0 \mid m_{U_t}$ and $v_0 \mid m_{U_{l-t}}$ such that
$\deg(v_0)=1$, $\deg(u_0) = 2$  and $\Phi(u_0)$ contains two different weights, too. Set $w := m/u_0v_0$;
we claim that $-\theta(v_0) \in \Sigma(\Phi(w))$, for otherwise $|\Sigma(\Phi(w))| \le p-1$ while $|\Phi(w)| \ge p-2$ 
which is only possible by Lemma~\ref{lemma:easy}  if $\Phi(w) = (-a^k, a^{p-2-k})$ for some $a \in Z_p \setminus \{ 0 \}$ and $0\le k \le p-2$.
But here we must have $k=0$ (or $k=p-2$) as otherwise we would get back to case (i). 
Then for the isotypic component $U_s$ to which the weight $a$ belongs we get  $\deg_{U_s}(w) \ge p-2 \ge 7$, 
a contradiction as before, provided that $p > 7$. For $p=7$ we get the same contradiction if $a \in \Phi(u_0)$, 
so it remains that $a= \theta(v_0)$, and therefore  $\Phi(m)= (a^6,5a,3a)$. 

(iii) if $m \not\in I_2 I_+$ and $\Phi(m_{U_t})=(a^i)$, $\Phi(m_{U_{l-t}})=(b^j)$ for some $a,b \in Z_p \setminus \{ 0\}$;
then $i >1$, say, hence by Lemma~\ref{lemma:easy}  a factorization $m=uv$ exists such that $u,v$
are $Z_p$-invariant monomials both containing a variable of weight  $a$;  we may also suppose that the zero-sum sequence $\Phi(v)$ is irreducible, 
hence $\deg(v) \le p$. Then $u\tau(v) \in I_+(R_+)_{\le p}$ and the monomials in $m - u\tau(v)$ 
 fall under case (i)-(ii).
\end{proof}

For the graded algebra $R=\F[V]^G$ and a positive integer $k$ 
the \emph{generalized Noether number} $\beta_k(R)$  was defined in \cite{cziszter-domokos:1} 
as the minimal positive integer $d$ such that $R_+$ is generated as a module over $R_+^k$ by elements of degree at most $d$. 
Evidently $\beta(R) = \beta_1(R)$.
The usefulness of this concept is shown in \cite{cziszter-domokos:1} and \cite{cziszter-domokos:2}.

\begin{theorem} \label{betak_zpz3}
Let $W = U_0 \oplus V^{\oplus 2}$ where $V$ is as in Corollary~\ref{cor:zpz3} 
and $U_0$ contains only  $1$-dimensional irreducible $G$-modules.  
Then $\beta_k(G, W) = kp+2$. 
If moreover $\ch(\F) = 0$ then $\beta_k(G) = kp+2$.
\end{theorem}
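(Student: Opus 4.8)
The plan is to establish $\beta_k(\F[W]^G)=kp+2$ by proving the two inequalities separately, and then to deduce $\beta_k(G)=kp+2$ in characteristic zero. Write $R:=\F[W]^G$, $I:=\F[W]^A$, let $\tau\colon I\to R$ be the relative transfer, and let $g$ generate $B$; I will use repeatedly the identities $\tau(fh)=\tau(f)h$ for $f\in I,\ h\in R$, and $\tau(f)\tau(h)=\sum_{b\in B}\tau(fh^b)$ for $f,h\in I$. For the lower bound I exhibit an explicit degree-$(kp+2)$ invariant outside $R_+^{k+1}$: pick a copy of a $3$-dimensional irreducible summand with coordinate ring $\F[x,y,z]$ (the $x,y,z$ being $A$-eigenvectors of nonzero weight cyclically permuted by $g$) and a coordinate $w$ on a nontrivial $1$-dimensional summand of $U_0$, so $w^g=\zeta w$ with $\zeta$ a primitive cube root of unity. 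Then $x^{kp}w^2\in I$ (its weight is $kp\,\theta(x)=0$) and $f:=\tau(x^{kp}w^2)=w^2(x^{kp}+\zeta^2y^{kp}+\zeta z^{kp})$ is a nonzero invariant of degree $kp+2$. If $f=\sum_\ell c_\ell h_{\ell,1}\cdots h_{\ell,k+1}$ with $h_{\ell,j}\in R_+$ homogeneous, then comparing coefficients of the monomial $x^{kp}w^2$ forces each contributing $h_{\ell,j}$ to supply a monomial $x^{\alpha_j}w^{\beta_j}$ with $\sum_j\alpha_j=kp$ and $\sum_j\beta_j=2$; $A$-invariance of $h_{\ell,j}$ gives $p\mid\alpha_j$ and $G$-invariance gives $3\mid\beta_j$, hence $\beta_j=0$ and then $\alpha_j\ge p$ for all $j$ (otherwise some $h_{\ell,j}$ contains the constant monomial), so $\deg f\ge(k+1)p>kp+2$, a contradiction. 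Thus $f\notin R_+^{k+1}$ and $\beta_k(\F[W]^G)\ge kp+2$.

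For the upper bound I show $\tau(m)\in R_+^{k+1}$ for every monomial $m\in I$ with $\deg m\ge kp+3$, by induction on $k$. Two preliminary reductions. Splitting $m=m_0m'$ with $m_0$ a $B$-eigenmonomial in the $1$-dimensional coordinates and $m'\in\F[V^{\oplus2}]^A$, one has $\tau(m)=m_0\sum_{b\in B}\chi(b)(m')^b$; since each $B$-isotypic component of $\F[U_0]$ is generated over $\F[U_0]^B$ in degree $\le2$, this reduces the problem to $m\in\F[V^{\oplus2}]^A$ (the bounded twist is harmless because $kp+2\ge p+2>3=\beta(Z_3)$). And Proposition~\ref{ZpZ3_dupla} applies to $m$ since $\deg m\ge kp+3>p+1$; its exceptional weight sequence $(a^6,3a,5a)$ occurs only in degree $p+1$, hence is never met while $\deg m\ge p+3$, but it does arise inside the recursion when $p=7$, and for those finitely many monomials one checks by hand that the transfer already lies in $R_+^2$.

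So assume $\deg m\ge kp+3$ and $m\in\F[V^{\oplus2}]^A$; by Proposition~\ref{ZpZ3_dupla}, $m\in I_2I_+^2+I_+(R_+)_{\le p}$. If $m$ lies in a term $ab$ with $a\in I_+$ and $b\in(R_+)_{\le p}$, then $\deg a\ge kp+3-p=(k-1)p+3$, so $\tau(a)\in R_+^{k}$ by the inductive hypothesis (trivially $\tau(a)\in R_+$ when $k=1$), and $\tau(m)=\tau(a)b\in R_+^{k+1}$. If $m$ lies in a term $uvw$ with $u\in I_2$ and $v,w\in I_+$, set $q:=vw\in I_+$; then $\deg q=\deg m-2\ge(k-1)p+3$, so $\tau(q)\in R_+^{k}$ by induction, and the auxiliary element $u\tau(q)\in I_+$ satisfies $\tau(u\tau(q))=\tau(u)\tau(q)\in R_+^{k+1}$. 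Since $m-u\tau(q)=-uq^{g}-uq^{g^{2}}$, it remains to handle the rotated monomials $uq^{g},uq^{g^{2}}$, which have the same degree as $m$; for this one uses, as in case~(iii) of the proof of Proposition~\ref{ZpZ3_borzalom}, a polynomial identity in $\F[W]$ that — exploiting the special shape of the factorization supplied by Proposition~\ref{ZpZ3_dupla} — exchanges each rotated monomial, modulo elements of $I_+(R_+)_{\le p}$ and of the form $f\tau(h)$ with $f\in I_+,\ h\in I$ (all of which transfer into $R_+^{k+1}$ by the computations above, invoking the inductive hypothesis on $f$ whenever $\deg f\ge(k-1)p+3$), for monomials strictly smaller in a fixed well-founded ordering of monomials (say, by the partition invariant $\lambda(\cdot)$). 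A Noetherian induction then completes the bound $\beta_k(\F[W]^G)\le kp+2$, and together with the lower bound, $\beta_k(G,W)=kp+2$.

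Finally, assume $\ch\F=0$. For an arbitrary module $V'\cong U_0'\oplus\bigoplus_iV_i^{\oplus a_i}$, a polarization argument (Weyl's theorem; cf.\ the reduction for $\beta_k$ in \cite{cziszter-domokos:1} and the circle of ideas around \cite{draisma-kemper-wehlau}) shows that $\beta_k(G,V')$ is not increased by replacing each multiplicity $a_i$ with $\min(a_i,3)$, while the case analysis behind Proposition~\ref{ZpZ3_dupla} — hence the upper bound just proved — is unaffected if each $3$-dimensional irreducible occurs at least twice rather than exactly twice; thus $\beta_k(G,V')\le kp+2$ for every $V'$, so $\beta_k(G)\le kp+2$, and with $\beta_k(G)\ge\beta_k(G,W)=kp+2$ we conclude $\beta_k(G)=kp+2$. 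The heart of the argument — the step I expect to be the main obstacle — is the second case of the inductive step: Proposition~\ref{ZpZ3_dupla} only delivers $m\in I_2I_+^2$, and because $\tau$ is merely $R$-linear the degree-$2$ factor cannot simply be pulled out; the three transfers $\tau(uvw),\tau(uv^{g}w^{g}),\tau(uv^{g}w^{g^{2}})$ are coupled only through $\sum_{b}\tau((uvw)(\cdot)^{b})=\tau(u)\tau(vw)$, which does not isolate $\tau(uvw)$, so one is forced to manufacture, in the style of Proposition~\ref{ZpZ3_borzalom}, tailored polynomial identities trading a monomial for a twisted rotation of itself modulo controlled error terms and to run a Noetherian induction — all the while carrying the exponent $k$ so that the degree-$2$ factor gets absorbed into a factor of degree exceeding $(k-1)p+2$.
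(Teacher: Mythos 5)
Your overall architecture (an explicit non-generator for the lower bound, Propositions~\ref{ZpZ3_borzalom} and \ref{ZpZ3_dupla} plus Weyl polarization for the upper bound) matches the paper, but the decisive step of the upper bound is a genuine gap --- and you flag it yourself as ``the main obstacle'' without closing it. Because you induct on $k$ on the statement ``$\tau(m)\in R_+^{k+1}$'', you must evaluate the transfer on the terms of $I_2I_+^2$ delivered by Proposition~\ref{ZpZ3_dupla}, where the degree-$2$ factor cannot be pulled through the merely $R$-linear map $\tau$; your proposed remedy is an unspecified family of ``tailored polynomial identities'' plus a ``Noetherian induction'' in which neither the identities nor the verification that the error terms land in $R_+^{k+1}$ is supplied. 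The paper sidesteps this entirely by proving a stronger, transfer-free statement: every monomial $m\in I=\F[W]^A$ with $\deg(m)\ge p+3$ lies in $I_+(R_+)_{\le p}$ \emph{as an element of the $R$-module $I$}. Iterating (each $I_+$-cofactor loses at most $p$ in degree) gives $I_{\ge kp+3}\subseteq I_+\bigl((R_+)_{\le p}\bigr)^k$, and applying $\tau$ then lands in $R_+^{k+1}$ with no coupling problem. The $I_2I_+^2$ terms are absorbed by a mechanism you never invoke, namely the inclusion $I_+^3\subseteq I_+R_++R_+$ from \cite{cziszter-domokos:1}, combined with a second application of Proposition~\ref{ZpZ3_dupla} to the degree-$\ge p+1$ cofactor and a case split on $\deg_{U_0}(m)$ that supplies extra $I_1$-factors from the $1$-dimensional summands (rather than your isotypic-component reduction for $\F[U_0]$). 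The $p=7$ exception $\Phi(v)=(a^6,5a,3a)$ is not ``checked by hand on finitely many monomials'': it forces $\deg_{U_s}(m)=7$, hence $\deg_{V_{s,i}}(m)\ge4$ for some $i$, so Proposition~\ref{ZpZ3_borzalom} applies directly to $m$.

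Two further slips. In the lower bound, ``$G$-invariance gives $3\mid\beta_j$'' is false when $\alpha_j>0$: for instance $\tau(x^pw)$ is a nonzero invariant containing the monomial $x^pw$. The conclusion survives because $p\mid\alpha_j$ and $\sum_j\alpha_j=kp$ force $\alpha_{j_0}=0$ for at least one of the $k+1$ indices, and a monomial $w^{\beta}$ with $1\le\beta\le2$ genuinely cannot occur in a $G$-invariant; note also that the paper simply cites \cite{cziszter-domokos:2} here. In the characteristic-zero step, Weyl's theorem reduces to $n_i-1=2$ copies of each $3$-dimensional irreducible (not $\min(a_i,3)$ copies), i.e.\ exactly to the module $W$ already treated, so no additional uniformity claim about ``at least two copies'' is needed or correct as stated.
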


\begin{proof}
It suffices to prove that $\beta_k(G, W ) \le kp +2$ since the reverse
inequality follows from a more general statement in \cite{cziszter-domokos:2}. 
By a straightforward induction on $k$ it is enough to prove that $w \in I_+(R_+)_{\le p}$ holds
for any monomial $w \in I$ with $\deg(w) \ge p+3$, where  $I = \F[W]^A$ and $R= \F[W]^G$. 
We shall repeatedly use the fact that $I_+^3 \subseteq I_+R_+ + R_+$ (see \cite{cziszter-domokos:1}). 
Now, if $\deg_{U_0}(m) \ge 2$ then  $m \in I_1^2 I_{\ge  p+1} \subseteq I_+(R_+)_{\le p}$. 
If $\deg_{U_0}(m) = 1$ then $m \in I_1I_2I_+^2 +  I_+(R_+)_{\le p}$ by Proposition~\ref{ZpZ3_dupla}
and we are done again. 
Finally, if $\deg_{U_0}(m) = 0$ then either $m \in I_2^2I_+^2 + I_+(R_+)_{\le p}$ by applying Proposition~\ref{ZpZ3_dupla} twice,
in which case we are done, or else $p=7$ and $m=uv$ where $u \in I_2$ and $\Phi(v) = (a^6,5a,3a)$. 
In this latter case denoting by $s$ the index for which the weight $a$ belongs to $U_s$ we have
$\deg_{U_s}(m) = \deg_{U_s}(u) + \deg_{U_s}(v) = 7$, hence $\deg_{V_{s,i}}(m) \ge 4$ for some $i \in \{1,2 \}$, 
and consequently $m \in I_+(R_+)_{\le p}$ holds by Proposition~\ref{ZpZ3_borzalom}. 

The second claim follows from the first using a theorem of Weyl (see \cite{weyl} II. 5 Theorem 2.5A) stating that
for any unimodular $G$-modules $V_1,...,V_t$ of dimensions $n_i := \dim(V_i)$ and any integers $m_i \ge n_i$, where $i=1,...,t$,
the ring $\F[V_1^{\oplus m_1} \oplus ... \oplus V_t^{\oplus m_t}]$ is generated by the polarizations of a generating system of
$\F[V_1^{\oplus n_1-1} \oplus ... \oplus V_t^{\oplus n_t-1}]$ plus some invariants of degrees $n_1,...,n_t$,
whence in our case $\beta_k(G) = \beta_k(G,W)$ for all $k \ge 1$.
\end{proof}

\begin{remark}
Studying  the dependence of $\beta_k(G)$ on $k$ lead in \cite{cziszter-domokos:3} to focus on 
$\sigma(R)$ defined as the minimal positive integer $n$ 
such that $R$ is finitely generated as a module over its subalgebra $\F[R_{\le n}]$ generated by the elements of degree at most $n$. 
Moreover, $\eta(R)$ is the minimal $d$ such that $R_+$ is generated as an $\F[R_{\le \sigma(R)}]$-module by its elements of degree at most $d$. In \cite{cziszter-domokos:3} it was  proved that $\sigma(Z_p \rtimes Z_q) = p $ for any odd primes $p,q$ such that $q \mid p-1$. 
Therefore  the first half of the proof of Theorem~\ref{betak_zpz3} can be rephrased as stating that $\eta(Z_p \rtimes Z_3) \le p+2$. 
\end{remark}


\section{An  application for separating invariants}

A subset  $T \subset \F[V]^G$ is called a \emph{separating system} if  
for any $u,v \in V$ belonging to different $G$-orbits there is an element $f \in T$ such that $f(u) \neq f(v)$. 
A generating system of $\F[V]^G$ is a separating system, but the converse is not true. 
The study of separating systems was propagated in \cite{derksen-kemper}, and is in the focus of several recent papers, see e.g. \cite{d:sep}, \cite{grosshans:2007}, \cite{dufresne-elmer-kohls}, \cite{neusel-sezer}, \cite{domokos-szabo}. 
Following \cite{kohls-kraft} we write $\beta_{\mathrm{sep}}(\F[V]^G)$ for the minimal $n$ such that there exists a separating system in $\F[V]^G$ consisting of elements of degree at most $n$. Moreover, define $\beta_{\mathrm{sep}}(G)$ as the maximum of $\beta_{\mathrm{sep}}(\F[V]^G)$, where $V$ ranges over the isomorphism classes of $G$-modules. Similarly $\sigma(G)$ is defined in \cite{cziszter-domokos:3} as the maximal possible value of $\sigma(\F[V]^G)$. 
The following inequalities are well known: 
\begin{align}\label{beta_sep_triv}
\sigma(G) \le \beta_{\mathrm{sep}} (G) \le \beta(G)
\end{align}
For our group $G=Z_p\rtimes Z_3$  we have $\sigma(G)=p$ and $\beta(G)\ge p+2$ by \cite{cziszter-domokos:3} and \cite{cziszter-domokos:2}, and now we shall see that both  of the inequalities in \eqref{beta_sep_triv} are strict: 

\begin{theorem}\label{thm:sepZpZ3} 
We have $\beta_{\mathrm{sep}}(Z_p\rtimes Z_3)=p+1$. 
\end{theorem}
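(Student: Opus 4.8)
The plan is to establish the two inequalities $\beta_{\sep}(G) \le p+1$ and $\beta_{\sep}(G) \ge p+1$ separately. For the lower bound I would look for a single $G$-module $V$ on which no separating system of degree $\le p$ can exist. The natural candidate is $V = V_i^{\oplus 2}$ for a suitable irreducible $3$-dimensional module, or a small sum of such; here one exhibits two points $u, v \in V$ in distinct $G$-orbits that are not separated by any invariant of degree $\le p$. Since $\sigma(G) = p$ means $\F[V]^G$ is a finite module over the subalgebra generated in degrees $\le p$, one has to be careful: the claim is not that degree-$p$ invariants fail to separate \emph{because} they generate a proper subalgebra, but that on this particular $V$ the lowest-degree invariant distinguishing $u$ and $v$ has degree exactly $p+1$. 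Concretely, I expect the relative transfer $\tau$ of a monomial of weight-sequence type $(a^{p-1})$ (an invariant of degree $p+1$ like $\tau(x^{p+1})$ or a norm-type invariant) to be the separating witness, and one shows every invariant of degree $\le p$ is constant on the pair, e.g.\ by a direct eigenvalue/character computation or by invoking the structure of $\F[V_i^{\oplus 2}]^G$ in low degrees.

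For the upper bound $\beta_{\sep}(G) \le p+1$, the strategy is to reduce to multiplicity-free modules via cheap polarization. By the theorem of Draisma--Kemper--Wehlau cited in the introduction, a separating system for $\F[V^{\oplus m}]^G$ can be obtained by polarizing a separating system for $\F[V^{\oplus n}]^G$ where $n = \dim V$ (or $n = \min\{m, \dim V\}$), and polarization does not increase degrees. Since the irreducible $G$-modules have dimension at most $3$, it suffices to bound $\beta_{\sep}(\F[W]^G)$ where $W$ is a sum of at most three copies of each irreducible — in fact, chasing through the argument, it should suffice to treat $W$ of the form handled in Corollary~\ref{cor:zpz3} together with a controlled number of extra copies, and then the multiplicity-free case $W = V_1 \oplus \dots \oplus V_l$ plus one-dimensionals is the real content. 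For that case Corollary~\ref{cor:zpz3} already tells us $\F[V]^A_+$ is generated over $\F[V]^G$ in degrees $\le p$; combined with the fact that $\F[V]^G$ itself, through the transfer $\tau: \F[V]^A \to \F[V]^G$, is spanned by transfers of monomials, one concludes that any invariant is, modulo $\F[V]^G_+ \cdot \F[V]^G_{\le p}$ (hence modulo things irrelevant for separation up to degree $p+1$), expressible using generators of degree $\le p+1$.

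The actual mechanism for turning a \emph{generation} statement into a \emph{separation} statement is the standard fact (see \cite{kohls-kraft}) that if $R_+ \subseteq R_{\le d} + R_+ R_+$ — or more precisely if the subalgebra generated by $R_{\le d}$ has the same variety as $R$, i.e.\ $R$ is integral over it — then $R_{\le d}$ is a separating set. Here $d = p+1$: one wants to show $\F[V]^G$ is integral over (equivalently, a finite module over) the subalgebra generated by invariants of degree $\le p+1$. Given $\sigma(G) = p$ this is immediate, \emph{but} that only gives $\beta_{\sep} \le p$ as a naive reading, which contradicts the lower bound; so the subtlety is that $\sigma(G) = p$ is about generation as a module over a \emph{polynomial} subalgebra generated in degrees $\le p$, and a separating subalgebra must contain enough invariants — the separating set of degree $\le p$ that $\sigma$ would suggest may fail to separate all orbits, which is exactly what the lower bound shows, and one genuinely needs the degree $p+1$ invariants (the transfers furnished by Corollary~\ref{cor:zpz3} and Proposition~\ref{ZpZ3_dupla}) to complete a separating system.

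The main obstacle I anticipate is precisely reconciling $\sigma(G) = p$ with $\beta_{\sep}(G) = p+1$: one must argue carefully that although $\F[V]^G$ is module-finite over a degree-$\le p$ subalgebra $S$, the set $R_{\le p}$ itself (as opposed to a cleverly chosen finite module basis, which includes higher-degree elements) need not be separating, while $R_{\le p+1}$ is. Making the upper bound work therefore requires showing that adjoining the degree-$(p+1)$ transfers to $R_{\le p}$ yields a set over which $R$ is integral — here is where Corollary~\ref{cor:zpz3} enters decisively, since it controls $\F[V]^A_+$ as an $R$-module in degrees $\le p$, and one pushes this through $\tau$. The lower bound obstacle is more computational: pinning down an explicit pair of points in a single well-chosen $V$ not separated below degree $p+1$, which likely requires understanding the orbit structure of $G$ on $V_i^{\oplus 2}$ and the low-degree invariants explicitly.
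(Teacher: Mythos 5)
Both halves of your outline have genuine gaps. For the lower bound, your candidate module $V_i^{\oplus 2}$ (or any sum of $3$-dimensional irreducibles) cannot exhibit the bound $p+1$: Corollary~\ref{cor:zpz3} together with the surjection $\tau:\F[V]^A\to\F[V]^G$ shows that for the multiplicity-free sum $V$ of all $3$-dimensional irreducibles the algebra $\F[V]^G$ is generated in degrees $\le p$, and Draisma--Kemper--Wehlau polarization then gives $\beta_{\mathrm{sep}}(\F[V_i^{\oplus 2}]^G)\le p$. The indispensable ingredient is a \emph{non-trivial $1$-dimensional} summand: the paper takes $U\oplus V$ with $U$ a non-trivial character of $G/A$ and $V$ one $3$-dimensional irreducible, and checks that the points $(1,1,0,0)$ and $(\omega,1,0,0)$ lie in distinct orbits (trivial stabilizer) yet every invariant of degree $\le p$ — necessarily a combination of $\tau(y^{3k})$, $\tau(x_i^p)$, and transfers vanishing at both points — agrees on them. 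The witness separating them is $y$ times a degree-$p$ relative invariant, not $\tau$ of a monomial of weight type $(a^{p-1})$; note also that $x^{p+1}$ is not $A$-invariant, so $\tau(x^{p+1})$ is not among the transfers the paper works with.

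For the upper bound, the polarization reduction to the multiplicity-free module $W$ is exactly the paper's first step, but the mechanism you propose afterwards is false: integrality of $R$ over the subalgebra generated by $R_{\le d}$ does \emph{not} imply that $R_{\le d}$ is separating (your own discussion of $\sigma(G)=p$ versus the lower bound is precisely a counterexample to that ``standard fact''), and you never replace it with a working argument. The paper instead separates points of $W=U\oplus V$ by a case analysis: if the $U$-components lie in different $B$-orbits, degree $3$ suffices; if the $V$-components lie in different $G$-orbits, degree $p$ suffices since $\beta(\F[V]^G)\le p$ by Corollary~\ref{cor:zpz3}; otherwise one reduces to $v_1=v_2=v$ with $\Stab_G(v)\le A$ and separates with $yf$, where $y$ is a degree-one semi-invariant coordinate of weight $\chi$ on a $1$-dimensional summand and $f\in\F[V]^{G,\chi}$ satisfies $f(v)\neq 0$ and $\deg f\le p$ — existence of such $f$ needs Lemma~\ref{lemma:zerolocusofrelinv} and the twisted transfer, and the degree bound again comes from Corollary~\ref{cor:zpz3}. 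This last case, degree $1$ plus degree $p$, is where $p+1$ actually arises, and it is missing from your outline; Proposition~\ref{ZpZ3_dupla} plays no role in this theorem.
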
 

\begin{proof} 
First we will prove the lower bound $\beta_{\mathrm{sep}}(G)\ge p+1$. 
Recall that $G = \bra a,b: a^p=b^3=1, bab^{-1}=a^r\ket$, where $r$ has order $3$ modulo $p$. 
Let $U$ and $V$ be irreducible representations of $G$ of dimension $1$ and $3$, respectively. 
Then $\F[U \oplus V] = \F[y,x_1,x_2,x_3]$ where $y^a=y$ and $y^b=\omega y$ for a primitive third root of unity $\omega$, 
while the $x_1,x_2,x_3$ are $a$-eigenvectors  of eigenvalues $\varepsilon, \varepsilon^r, \varepsilon^{r^2}$ for some primitive $p$-th root of unity $\varepsilon$,
and $x_1,x_2,x_3$ are cyclically permuted by $b$. 
Now, the point $(1,1,0,0)$ has trivial stabilizer in $G$, hence 
the points $(1,1,0,0)$ and $(\omega,1,0,0)$ do not belong to the same $G$-orbit. 
We claim that they cannot be separated by invariants of degree at most $p$. Indeed, suppose to the contrary that they can be separated. Then there exists an $\langle a\rangle$-invariant monomial $u$ with $\deg(u)\le p$ and $\tau(u)(1,1,0,0)\neq \tau(u)(\omega,1,0,0)$. If an $\langle a\rangle$-invariant  monomial $v$ involves at least two variables from $\{x_1,x_2,x_3\}$, then $\tau(v)$ vanishes on both of $(1,1,0,0)$ and $(\omega,1,0,0)$. If $u$ involves only $y$, then  $u=y^{3k}$ for some positive integer $k$, whence $\tau(u)$ takes the value $1$ both on $(1,1,0,0)$ and $(\omega,1,0,0)$. So $u$ involves exactly one variable from $\{x_1,x_2,x_3\}$, forcing that $u=x_i^p$, and then 
$\tau(u)$ agrees on the two given points, a contradiction.  

Next we prove the inequality $\beta_{\mathrm{sep}}(G)\leq p+1$. 
Let $W$ be the  multiplicity free $G$-module which contains every irreducible $G$-module  with multiplicity $1$. 
An arbitrary $G$-module is a direct summand in the direct sum $W^{\oplus n}$ of $n$ copies of $W$ for a sufficiently large integer $n$. 
According to a theorem of Draisma, Kemper, and Wehlau  \cite{draisma-kemper-wehlau} (see also \cite{losik-michor-popov} and \cite{grosshans:2007})  a separating set of $\F[W^{\oplus n}]^{G}$
can be obtained by ``cheap" polarization from a separating set of $\F[W]^G$, which is a degree-preserving procedure, whence 
 $ \beta_{\mathrm{sep}}(\F[W^{\oplus n}]^G) = \beta_{\mathrm{sep}}(\F[W]^G)$ and consequently 
\begin{align} \beta_{\mathrm{sep}}(G) = \beta_{\mathrm{sep}}(\F[W]^G) \end{align}
Now let $W= U \oplus V$ 
where $U$ is the sum of $1$-dimensional irreducibles, and $V$ is the sum of $3$-dimensional irreducibles. 
Suppose that $(u_1,v_1)$ and $(u_2,v_2)$ belong to different $G$-orbits in $U\oplus V$. We need to show that they can be separated by a polynomial invariant of degree at most $p+1$. 
If $u_1$ and $u_2$ belong to different $G$-orbits, then they can be separated by a polynomial invariant of degree at most $3=|G/A|$ (recall that $A$ acts trivially on $U$), and we are done. From now on we assume that $u_1$ and $u_2$ have the same $G/A=B$-orbits. 
If $v_1$ and $v_2$ belong to different $G$-orbits in $V$, then they can be separated by a $G$-invariant on $V$ of degree at most $p$ by 
Corollary~\ref{cor:zpz3} and we are done. It remains that $v_1=gv_2$ for some $g\in G$; after replacing $(u_2,v_2)$ by an appropriate element $v_1=v_2=v$ might be assumed, and then $u_1$ and $u_2$ are not on the same orbit under the stabilizer  $\mathrm{Stab}_G(v)$ of $v$. Consequently $\mathrm{Stab}_G(v)$ is contained in $ A$ (for otherwise $\mathrm{Stab}_G(v)$ is mapped surjectively onto $G/A$). 
Let $U_0$ be a $1$-dimensional summand in $U=U_0\oplus U_{1}$ such that $\pi(u_1,v)\neq\pi(u_2,v)$, where $\pi: U \oplus V \to U_0$ is the projection onto $U_0$ with kernel 
$U_{1} \oplus V$. Denote by $y$ the coordinate function on $U_0$, viewed as an element of $\mathbb{F}[U \oplus V]$ by composing it with $\pi$. If $G$ acts trivially on $U_0$, then $y$ is a $G$-invariant of degree $1$ that separates $(u_1,v)$ and $(u_2,v)$. Otherwise $y^g=\chi(g)^{-1}y$ for some non-trivial group homomorphism  
$\chi:G\to\F^\times$. 
By Lemma~\ref{lemma:zerolocusofrelinv} below, there is an $f\in\mathbb{F}[V]^{G,\chi}:=\{h\in\F[V]\mid h^g=\chi(g)h \quad \forall g\in G\}$ such that $f(v)\neq 0$. Moreover, since a twisted version of the relative transfer map gives an $R$-module surjection $I\to \F[V]^{G,\chi}$ 
(see in the proof of Lemma~\ref{lemma:zerolocusofrelinv}) we infer from Corollary~\ref{cor:zpz3} that
$\mathbb{F}[V]^{G,\chi}$ is generated as an $\mathbb{F}[V]^G$-module by its elements of degree at most $p$, 
hence we may assume that $f$  has degree at most $p$. 
Now $yf$ is a $G$-invariant of degree at most $p+1$ which separates $(u_1,v)$ and $(u_2,v)$  by construction. 
\end{proof}

\begin{remark}
Note that by a straightforward extension of the argument in the first part of the above proof
we get  $\beta_{\mathrm{sep}}(Z_p \rtimes Z_q) \ge p+1$ for any $q \mid p-1$.
\end{remark}

\begin{lemma} \label{lemma:zerolocusofrelinv}
Given a group homomorphism $\chi: G \to \mathbb{F}^{\times}$ and any point $v \in V$ such that $\Stab_G(v) \le \ker(\chi)$,
a relative invariant $f \in \mathbb{F}[V]^{G,\chi}$ exists for which $f(v) \neq 0$.
\end{lemma}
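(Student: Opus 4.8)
The plan is to construct $f$ by a standard averaging (transfer) argument relative to the character $\chi$, starting from a polynomial that is nonzero at $v$ and then correcting it to have the correct $G$-isotypic behaviour. First I would choose a $\Stab_G(v)$-invariant polynomial $h_0 \in \F[V]$ with $h_0(v) \neq 0$; the simplest choice is a suitable product of linear forms, or even just the requirement $h_0(v)=1$ after scaling, but one must ensure $h_0$ transforms by the character $\chi$ under the subgroup $\Stab_G(v)$ — which is automatic since $\Stab_G(v) \le \ker\chi$ means $\chi$ restricts to the trivial character there, so any $\Stab_G(v)$-invariant $h_0$ works. Then I would form the twisted transfer
\[
f := \sum_{gH \in G/H} \chi(g)^{-1}\, h_0^{\,g},
\]
where $H := \Stab_G(v)$ and the sum is over a set of coset representatives; because $h_0$ is $H$-invariant and $\chi|_H$ is trivial, this expression is independent of the choice of representatives, and a direct check shows $f^{g'} = \chi(g') f$ for all $g' \in G$, i.e. $f \in \F[V]^{G,\chi}$.

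The remaining point — and the only real obstacle — is to guarantee $f(v) \neq 0$, since a priori the terms in the transfer sum could cancel at $v$. Here I would evaluate: since $v$ is fixed by every element of $H$, the term indexed by a coset $gH$ contributes $\chi(g)^{-1} h_0(g v)$, and for the coset $g \in H$ this is $\chi(1)^{-1} h_0(v) = h_0(v) \neq 0$. To kill the potential cancellation from the other cosets, I would refine the choice of $h_0$: pick $h_0$ to be a product of linear forms vanishing on all the points $gv$ for $g \notin H$ (there are finitely many such $G$-translates) while not vanishing at $v$ — this is possible because $gv \neq v$ for $g \notin H$ by definition of the stabiliser, so one can separate $v$ from the finite set $\{gv : gH \neq H\}$ by a linear form, and take a product over the distinct cosets. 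With this $h_0$, every term of the transfer sum except the one indexed by $H$ vanishes at $v$, whence $f(v) = h_0(v) \neq 0$.

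Finally I would record the companion observation used in the proof of Theorem~\ref{thm:sepZpZ3}: the same twisted transfer construction, applied not just to $v$ but as a map on polynomials, gives an $\F[V]^G$-module surjection $\F[V]^A \to \F[V]^{G,\chi}$, since on the normal subgroup $A$ the character $\chi$ may or may not be trivial but in either case $\sum_{b \in B}\chi(b)^{-1} f^b$ lands in $\F[V]^{G,\chi}$ whenever $f \in \F[V]^A$, and it is surjective because any relative invariant is already $A$-semi-invariant of a fixed $A$-weight and one can reduce to the $A$-invariant case after multiplying by a monomial of complementary weight. The delicate step throughout is the non-vanishing at $v$, which the product-of-linear-forms choice of $h_0$ handles cleanly; everything else is a routine verification of the transformation law.
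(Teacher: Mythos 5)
Your proposal is correct and follows essentially the same route as the paper: both apply the twisted transfer $\tau_\chi(h)=\sum_{g\in G}\chi(g)^{-1}h^g$ to a polynomial interpolated at the finitely many distinct translates $g_1v,\dots,g_nv$ so that the sum cannot cancel at $v$, the only difference being that the paper prescribes $h(g_iv)=\chi(g_i)$ so that every term contributes and $f(v)=|G|\neq 0$, whereas you make $h_0$ vanish at every translate other than $v$ so that a single term survives. One small repair: for your coset-indexed sum to be well defined (and to transform correctly) you need $h_0$ itself to be $\Stab_G(v)$-invariant, which a product of affine-linear forms need not be; either average that product over the stabiliser (this preserves the vanishing conditions and keeps $h_0(v)\neq 0$, since $v$ is fixed by the stabiliser) or, as the paper does, simply sum over all of $G$, which only introduces the factor $|\Stab_G(v)|$, invertible in $\F$ by the non-modularity assumption.
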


\begin{proof}
Let $g_1,...,g_n \in G$ be a system of representatives of the left cosets of the subgroup $\Stab_G(v)$. 
Then  $g_1  v, ..., g_n v \in V$ are distinct. 
Therefore there exists a polynomial $h \in \mathbb{F}[V]$ such that
$h(g_i  v) = \chi(g_i)$   for each $i=1,...,n$. 
Now  set $f := \tau_{\chi}(h)$ where $\tau_{\chi}: \mathbb{F}[V] \to \mathbb{F}[V]^{G,\chi}$
is the twisted transfer map defined as
$\tau_{\chi}(h):=\sum_{g\in G}\chi(g)^{-1}h^g$. 
Then $f\in \F[V]^{G,\chi}$ by construction.
Moreover, 
since $\Stab_G(v) \le \ker(\chi)$  we have 
\[ f(v) = \sum_{g \in G} \chi(g)^{-1} h(g  v) = |\Stab_G(v)| \sum_{i=1}^n \chi(g_i)^{-1} h(g_i  v) = |G| \]
which is indeed a non-zero element in $\mathbb{F}$. 
\end{proof}

\begin{center} {\bf Acknowledgement}\end{center}

The author thanks M\'aty\'as Domokos for conversations that helped to complete this research.


\end{document}